\documentclass{amsart}
\usepackage{amsthm}
\usepackage{graphicx,textcomp}
\usepackage{amsmath,bm,amsfonts,mathrsfs,amssymb}
\usepackage{hyperref}
\newtheorem{theorem}{Theorem}[section]
\newtheorem{lemma}[theorem]{Lemma}
\newtheorem{prop}[theorem]{Proposition}

\newtheorem{rem}[theorem]{Remark}

\usepackage{comment} 
\usepackage{appendix}
\usepackage{cancel}
\usepackage{graphicx,color}
\usepackage{enumerate}

\usepackage{xcolor}

\begin{document}
\title{On an infinitesimal Polyakov formula  for genus zero polyhedra }
	\author{Alexey Kokotov}
	\address{Department of Mathematics \& Statistics, Concordia University, 1455 De Maisonneuve Blvd. W. Montreal, QC  H3G 1M8, Canada, \url{https://orcid.org/0000-0003-1940-0306}}
	\email{alexey.kokotov@concordia.ca}
	\thanks{The research of the first author was supported by Max Planck Institute for Mathematics in Bonn}
	
	\author{Dmitrii Korikov}
	\address{Department of Mathematics \& Statistics, Concordia University, 1455 De Maisonneuve Blvd. W. Montreal, QC  H3G 1M8, Canada, \url{https://orcid.org/0000-0002-3212-5874}}
	\email{dmitrii.v.korikov@gmail.com}
	\thanks{The research of the second author was supported by Fonds de recherche du Qu\'ebec.}
	
	\subjclass[2020]{Primary 58J52,35P99,30F10,30F45; Secondary 32G15,	32G08}
	\keywords{Determinants of Laplacians, convex polygons, Hadamard variational formula}
	
	\date{\today}
	
	\maketitle
	\begin{abstract}
		Let $X$ be a genus zero compact polyhedral surface (the Riemann sphere equipped with a flat conical metric $m$). We derive the variational formulas for the determinant of the Laplacian, ${\rm det}\,\Delta^m$, on $X$ under infinitesimal variations of the  positions of the conical points and the conical angles (i. e. infinitesimal variations of $X$ in the class of polyhedra with the same number of vertices). Besides having an independent interest, this derivation may serve as a somewhat belated mathematical counterpart of the well-known heuristic calculation of ${\rm det}\,\Delta^m$ performed by Aurell and Salomonson in the 90-s.              
	\end{abstract}
\section{Introduction}
Let $X$ be the Riemann sphere with flat conical metric $m$, having conical angles $\beta_k=2\pi(b_k+1)$, $b_k>-1$ at conical points $z_k$, $k=1, \dots, M$. 
Then one has 
$$b_1+b_2+\dots+b_M=-2$$
(due to the Gauss-Bonnet theorem) and 
$$m=C\prod_{k=1}^M|z-z_k|^{2b_k}|dz|^2$$
with some $C>0$.
Alternatively, $X$ can be introduced as a compact polyhedral  surface of genus zero, i. e. a closed genus zero surface glued from Euclidean triangles (see, e. g., \cite{Troyanov}).

Let ${\rm det}\Delta^m$ be the (modified, i. e. with zero mode excluded) $\zeta$-regularized determinant of the Friedrichs Laplacian on $X$ corresponding to the metric $m$.
This quantity was first computed by Aurell and Salomonson in \cite{AS2} via partially heuristic arguments: a closed expression for the determinant (AS formula in the sequel) through the conical angles $\beta_k$ and the positions, $z_k$, of the conical singularities was proposed. It has the form
$${\rm det}\Delta^m=C_{AS}(\beta_1, \dots, \beta_M){\rm Area}(X,m)\prod_{k\neq l}|z_k-z_l|^{\frac{b_kb_l}{6(b_k+1)}}$$
where an expression (via Hadamard type regularization of some special diverging integral) for $C_{AS}(\beta_1, \dots, \beta_N)$ can be found in \cite{AS2}, f-la (50) and \cite{AS1}, f-las (51-54). Note that the heuristic arguments of \cite{AS2} also used in \cite{AS1} in slightly different situation are, seemingly, mathematically ungrounded (see, e. g., \cite{Mazzeo} for discussion of arising subtleties).

In \cite{Kok} it was found a comparison formula   for the determinants of the Laplacians corresponding to two conformally equivalent flat conical metrics on an arbitrary compact Riemann surface of any genus (a global Polyakov type formula for two conformally equivalent polyhedra). Initially, its derivation was based on two  ideas:

 1)to make use of the BFK gluing formula from \cite{BFK} in order to smooth out the conical singularities 
 
 2)to apply the classical comparison Polyakov formula for the arising two smooth conformally equivalent metrics. 

Studying a preliminary version of \cite{Kok}, G. Carron and L. Hillairet noticed that the second part of the argument can be significantly improved:  replacing Polyakov's comparison formula by the Alvarez one and making use of explicit calculation of Spreafico of the determinant of Dirichlet Laplacian on a cone  \cite{Spreafico}, one gets the values of all the undetermined constants in the comparison formula from the preliminary version of \cite{Kok}. This improvement was incorporated in \cite{Kok}. That is why we refer to the comparison formula from \cite{Kok} as the CHS  formula. 

As an immediate consequence of the CHS formula applied to the genus zero case, one obtains an alternative expression for  
${\rm det}\Delta^m$ as
 $${\rm det}\Delta^m=C_{CHS}(\beta_1, \dots, \beta_M){\rm Area}(X,m)\prod_{k\neq l}|z_k-z_l|^{\frac{b_kb_l}{6(b_k+1)}}$$
with $C_{CHS}(\beta_1, \dots, \beta_N)$ having an explicit expression through the Barnes double zeta function.

It was observed in \cite{Kalvin} that the values  of the expressions for  $C_{AS}$ and $C_{CHS}$ (being considered separately from formulas for the determinant, at the first view, unrelated) at the angles that are rational multiples of $\pi$ can be extracted from the  literature 
(Appendix to \cite{AS1} and \cite{Dowker}) and, not surprisingly, coincide. Thus, due to a continuity argument, the heuristic AS formula follows from the CHS formula, and this observation was called in \cite{Kalvin} the first rigorous proof of the AS formula.

It seems very natural to ask whether a direct and, in a sense, better proof (not using such involved tools as the BFK and Alvarez formulas together with lengthy and hard calculations with special functions from \cite{Spreafico}, \cite{Dowker}) of the AS formula is possible. To get such a proof one has to study the dependence of the functional
${\rm det}\Delta^m$ on positions of the singularities and  conical angles.  The first attempt to do that was made in a very interesting unpublished  manuscript of Tankut Can \cite{TC}, where a variational formula for  ${\rm det}\Delta^m$  with respect to positions of conical singularities was conjectured (of course, the formula itself easily follows from Aurell-Salomonson result, the novelty was in the way to prove it). The arguments in \cite{TC} were completely heuristic and used the machinery of conformal field theory. The conjecture of Tankut Can served as the main motivation of the present work.

In the present paper, using the machinery of classical perturbation theory and the technique of the theory of elliptic equations in singularly perturbed domains,  we prove variational formulas for ${\rm det}\Delta^m$  with respect both to the positions of conical points and the conical angles (see f-las \ref{Tankut Can} and \ref{IsthereBarnes} below). It should be noted that 
variational formulas of this type for flat conical metrics were previously known only for metrics with trivial holonomy with special (and fixed) conical angles that are integer multiples of $2\pi$ (see, e. g. \cite{KokKor}). 

 The Aurell-Salomonson type formula for the determinant can be obtained from these variational formulas via straightforward integration, so, in particular, this gives the required direct and natural proof of this old result. In the subsequent paper, using a similar technique,  we are going to  study variational formulas for the determinant of the Dolbeault Laplacian (acting in a holomorphic line bundle) under infinitesimal variations (within the same conformal class) of polyhedra of higher genus. 
 
 The structure of the paper is as follows.
  
  In Section 2
 we consider a toy example (of course, well-known to experts) of a genus zero polyhedral surface : a tetrahedron with four conical singularities of angle $\pi$; in this case the determinant of the Laplacian can be easily computed by passing to the elliptic curve that covers the tetrahedron. This result is needed to fix the undetermined constant of integration in the AS-type formulas, and to serve as a reference polyhedron to get the value of the determinant ${\rm det}\,\Delta^m$ from the comparison CHS formula. The latter calculation is shown at the end of the same Section 2.  
 
 In Section 3, for the reader convenience, we illustrate the general scheme of our derivation of the infinitesimal Polyakov formulas for polyhedra, just deriving via the same method the classical Polyakov formula for a smooth metric on the Riemann sphere.
 Of course, this proof is somewhat longer than the standard one (see,e. g. \cite{Sarnak}, or \cite{Fay2}; it should be said that the methods of \cite{Fay2} play an important role in our considerations), but, probably, it may have  some independent value. 
 
  In the main Section 4 we derive the Polyakov type variational formulas for an arbitrary genus zero polyhedron. The proofs of two technical lemmas can be found in the Appendices A and B.  
 

\section{Toy model: a tetrahedron with four vertices of conical angles $\pi$}

Here we consider a toy example for the theory of polyhedral surfaces: a  tetrahedron with four vertices of conical angles $\pi$. In this case the spectrum of the Laplacian is explicitly known and  ${\rm det}\Delta^m$ can be computed with no effort.  We closely follow \cite{KokKorJPhA}, paying more attention to the arising numerical constants.  

Let $z_1, \dots, z_4\in {\mathbb C}$, introduce a flat metric $m$ on $X$ with four conical points with conical angle $\pi$ via
$$m=\frac{|dz|^2}{|z-z_1||z-z_2||z-z_3||z-z_4|}\,.$$
Consider the ramified double covering of the Riemann sphere 
with ramification points $z_1, \dots, z_4$. This is an elliptic curve $E$ equipped with flat nonsingular metric given by the 
modulus square of the holomorphic one-form
$$\omega=\frac{dz}{\sqrt{(z-z_1)(z-z_2)(z-z_3)(z-z_4)}}\,;$$
this metric coincides with the lift of the metric $m$.

Choose the basic $a$ and $b$-cycles on $E$ in the standard way, and let $A$ and $B$ be the corresponding  periods of the form $\omega$. 
Then $E$ is obtained via factorization of ${\mathbb C}$ over the lattice $L=\{mA+nB\}$ and a local coordinate on $E$ is given by
 $\zeta(P)=\int_{z_1}^P\omega$. The map $\zeta\mapsto -\zeta ({\rm mod}\,L )$ generates a holomorphic involution, $*$, of $E$ with four fixed points.

 The factorization map $E\to E/*$ coincides with the (ramified) covering map from the above. 
The nonzero eigenvalues  of the Laplacian, $\Delta^{|\omega|^2}$ on $E$ corresponding to the metric $|\omega|^2$ are double,  each nonzero eigenvalue has two eigenfunctions: one of them is $*$-invariant and another is $*$-antiinvariant.  The $*$-invariant eigenfunction descends to the eigenfunction of $\Delta^m$ corresponding to the same eigenvalue. 
This gives the relation $$\zeta_{\Delta^{|\omega|^2}}(s)=2\zeta_{\Delta^m}(s)$$ for the operator $\zeta$-functions  of $\Delta^{|\omega|^2}$ and $\Delta^m$.
In particular, one gets the equality 
\begin{equation}\label{relation}{\rm det}'{\Delta^{|\omega|^2}}=\left({\rm det}'{\Delta^m}\right)^2\end{equation}
for the determinants of the Laplacians (with zero mode excluded).  
The value of the determinant in the left hand side of (\ref{relation}) is well known and is given by
\begin{equation}
	{\rm det}'\Delta^{|\omega|^2}={\rm Area}\,(E)\,\Im(B/A)\, |\eta(B/A)|^4\,
	\end{equation}
where ${\rm Area}\,(E)=|\Im(A\bar B)|$ and $\eta$ is the Dedekind eta-function (cf., e. g., \cite{TakhMcInt}, derivation of formula (1.3), mind the extra factor $1/4$ in the definition of the Laplacian there). 

Thus,
\begin{equation}
	{\rm det}'{\Delta^m}=\frac{1}{|A|}{\rm Area}\,(E)|\eta(B/A)|^2
	\end{equation}

Using, the identity $-2\pi\eta^3(\sigma)=\theta'_1(\sigma)$,  the Jacobi identity, $\theta_1'=\pi\theta_2\theta_3\theta_4$,  for the theta-constants and the Thomae formulas for the theta-constants,
$$\theta_k^8=\frac{1}{(2\pi)^4}A^4(z_{j_1}-z_{j_2})^2(z_{j_3}-z_{j_4})^2\,,$$
where $k=2, 3, 4$ and $(j_1, j_2, j_3, j_4)$ are appropriate permutations of $(1, 2, 3, 4)$, 
one gets the relation
$$|\eta(B/A)|^2=\frac{|A|}{2^{5/3}\pi}  \prod_{i<j}|z_i-z_j|^{1/6}\,.$$
  
In addition, one has 
$${\rm Area}\,(E)=2{\rm Area}\,(X)=2\int_X\frac{|dz|^2}{|z-z_1||z-z_2||z-z_3||z-z_4|}\,$$

Thus, we obtain an explicit formula for the determinant of the Laplacian on the tetrahedron $X$:
\begin{equation}\label{tetr}
		{\rm det}'{\Delta^m}=\frac{1}{2^{2/3}\pi}\int_X\frac{|dz|^2}{|z-z_1||z-z_2||z-z_3||z-z_4|}\prod_{i<j}|z_i-z_j|^{1/6}\,.
\end{equation}

\subsection{Computation of ${\rm det}'{\Delta^m}$ via CHS formula}
As we noticed in Introduction one can derive an explicit formula for ${\rm det}'{\Delta^m}$ (an alternative to AS formula) as an immediate corollary of comparison formula (11) (Proposition 1) from \cite{Kok}. The most obvious way to do this is to make use of the following convenient form of Proposition 1 from \cite{Kok} for genus zero case (it was proposed by Tankut Can in \cite{TC}).
Let $m_1=\prod_{i=1}^N|z-P_j|^{2a_j}|dz|^2$ and $m_2=\prod_{i=1}^M|z-Q_i|^{2b_i}|dz|^2$ be two flat conical metrics on the Riemann sphere ($\sum a_j=\sum b_i=-2$). Then
\begin{equation}\label{Tankut}
\log \left[\frac{{\rm det}'\Delta^{m_1} }{ {\rm det}'\Delta^{m_2} }   \right]=\log \left[
\frac{\prod_{i=1}^NC(a_i){\rm Area}\,(X, m_1)}         {\prod_{j=1}^MC(b_j){\rm Area}\,(X, m_2)}\right]+\end{equation}
$$
+\frac{1}{6}\sum_{k<l}a_ka_l\left(\frac{1}{1+a_k}+\frac{1}{1+a_l}\right)\log|P_k-P_l|-\frac{1}{6}\sum_{k<l}b_kb_l\left(\frac{1}{1+b_k}+\frac{1}{1+b_l}\right)\log|Q_k-Q_l|\,.$$

Here the constant $C(a)$ is the ratio of two determinants: the determinant of the Laplace operator with Dirichlet boundary conditions on the right circular cone with slant height $1/(a+1)$ and the conical angle $2\pi(a+1)$ and the determinant of the Laplacian with Dirichlet boundary conditions in the unit disk. This constant is explicitly computed in \cite{Spreafico}, Theorem 1 (see also \cite{Klevtsov}, f-la (B.13) for a shorter expression via the Barnes double zeta function). 

Equation (\ref{Tankut}) can be obtained from Proposition 1 from \cite{Kok} by means of the following simple observation (due to T. Can). To compute the  quantities ${\bf g_k}$,
${\bf f_k}$ from (11) in \cite{Kok} one does not need to know explicit expressions for the distinguished local parameters $x_k$ near conical point $P_k$ (which are hard to find). It is possible to replace the distinguished local parameters $x_k$ everywhere in (11) from \cite{Kok} by  arbitrary local parameters $\zeta_k$ with  property $\zeta_k=x_k+o(x_k)$ as $x_k\to 0$. Say,
for the metric $\prod_{i=1}^N|z-P_i|^{2a_i}|dz|^2$ one can replace the distinguished local parameter $x_k$ near $P_k$ by the local parameter
$\zeta_k=\prod_{i\neq k}(P_k-P_i)^{a_i/(1+a_k)}(z-P_k)$. After this replacement  formula (11) from \cite{Kok} turns into a completely explicit one and a straightforward calculation shows that it reduces to (\ref{Tankut}). 

Choosing in (\ref{Tankut}) as $m_2$ the metric of the tetrahedron ($b_1=b_2=b_3=b_4=-1/2$) and making use of (\ref{tetr}), one immediately gets a closed explicit expression for ${\rm det}'\Delta^{m_1}$ which constitutes the claim of Proposition 3.3 from \cite{Kalvin}.

\section{Classical Polyakov formula on the Riemann sphere} 
\label{sec smooth Polyakov}

First, let us briefly deduce the classical infinitesimal Polyakov formula for the real-analytic family 
$$t\mapsto m_t=e^{-\phi_t}|dz|^2$$ 
of smooth metrics on the sphere $X=\overline{\mathbb{C}}$. This exemplifies the main steps of the reasoning that will be used in the non-smooth case. From now on, we omit the dependence on the metric $m$ and the parameter $t$ in the notation while the dot denotes the differentiation in $t$. 

\subsubsection*{\bf Variation of individual eigenvalues.} Let $t\mapsto\lambda_k(t)$ ($k=1,\dots$) be families of the nonzero eigenvalues of $\Delta_t=-4e^{\phi_t}\partial_z\partial_{\overline{z}}$ counted with their multiplicities in such a way that $\lambda_1(0)\le \lambda_2(0) \le \dots$; let also $t\mapsto \{u_k(\cdot,t)\}_{k=1,2,\dots}$ be the corresponding family of orthonormal bases of eigenfunctions. Using the standard perturbation theory, one can chose each family $t\mapsto u_k(\cdot,t)$ in such a way that $u_{k}(x,t)$ is smooth in $(x,t)$ as long as $\lambda_k(t)$ is simple, where $x$ is an arbitrary (smooth) coordinate on the sphere. 

Differentiating the equation $(\Delta-\lambda_k)u_k=0$ in $t$ and taking into account that 
$$\dot{\Delta}=\dot{\phi}\Delta,$$ 
one arrives at
\begin{equation}
\label{variation of eigenfucntions}
(\Delta-\lambda_k)\dot{u}_k=(\dot{\lambda}_k-\dot{\Delta})u_k=(\dot{\lambda}_k-\dot{\phi}\lambda_k)u_k=\lambda_k (\dot{\kappa}_k-\dot{\phi})u_k,
\end{equation}
where $\kappa_k={\rm log}\lambda_k$. Since the right-hand side of (\ref{variation of eigenfucntions}) must be orthogonal to ${\rm Ker}(\Delta-\lambda_k)\ni u_k$ in $L_2(X;g)$, we have
\begin{equation}
\label{variation of eigenvalues}
\dot{\kappa}_k=\int_X\dot{\phi}u_k^2dS,
\end{equation}
where $dS=e^{-\phi}d\overline{z}\wedge dz/2i$ is the area element. 

If $\lambda_j(0)=\dots=\lambda_{j+m-1}(0)$ is an eigenvalue $\Delta_t$ of multiplicity $m$, then the sums $\sum_{k=0}^{m-1}\kappa_{j+k}$ and $\sum_{k=0}^{m-1}u_k(x)u_k(y)$ are differentiable in $t$, and
\begin{equation}
\label{variation of eigenvalues mult}
\partial_t\Big(\sum_{k=0}^{m-1}\kappa_{j+k}\Big)=\int\limits_X\dot{\phi}\Big(\sum_{k=0}^{m-1} u_k^2\Big)dS=\int\limits_X\dot{\phi}(x)\Big[\underset{\mu=\lambda_j(0)}{\rm Res}R_\mu(x,y)\Big]\Big|_{y=x}dS(x),
\end{equation}
where $R_\mu(x,y)=R_{\mu,t}(x,y)$ is the resolvent kernel of $\Delta=\Delta_t$. Formula (\ref{variation of eigenvalues mult}) is proved in Appendix \ref{SPD theory} (where the even more complicated case of families of metrics with conical singularities is considered).

\subsubsection*{\bf Variation of $\zeta_{\Delta-\mu}(2)$.} Formulas (\ref{variation of eigenvalues}) and (\ref{variation of eigenvalues mult}) imply
$$\partial_t((\lambda_k-\mu)^{-2})=-2(\lambda_k-\mu)^{-3}\lambda_k\int_X\dot{\phi}u_k^2dS=-\int_X \partial_\mu^2\Big(\mu \frac{u_k^2}{\lambda_k-\mu}\Big)\dot{\phi}dS.$$
According to the Weyl's law $\lambda_k\sim k$, one has $\sum_{k=N}^{\infty}|\partial_t((\lambda_k-\mu)^{-2})|\le C\max\limits_X|\dot{\phi}|/N$. Making summation over $k$, one arrives at
\begin{equation}
\label{zeta of 2}
\dot{\zeta}_{\Delta-\mu}(2)=\int_X \big[-\partial_\mu^2(\mu R_\mu(x,y))\big]_{y=x}\dot{\phi}(x)dS(x),
\end{equation}
Since $\partial^2_\mu$ kills all the terms linear in $\mu$, one can replace the function in the square brackets with $\partial_\mu^2\psi_\mu$, where
\begin{equation}
\label{psi mu}
\psi_\mu(x):=-\mu \Big[R_{\mu}(x,y)+\frac{1}{A\mu}+\frac{{\rm log}d(x,y)}{2\pi}\Big]_{y=x}
\end{equation}
and $A$ and $d$ denotes the area of the sphere and the geodesic distance in the metric $m=m_t$. The well-known results on the near-diagonal asymptotics of the resolvent kernel of Laplacian (see formula (\ref{resolvent kernal asymp}) below) imply that $\psi_\mu(x)$ is finite and smooth in $x\in X$. Note that $\psi_0=0$. 

Recall that the following asymptotics 
\begin{align}
\label{resolvent kernal asymp}
R_\mu(x,y)+\frac{{\rm log}d(x,y)}{2\pi}=-\frac{1}{2\pi}\Big[\gamma+\frac{{\rm log}(4|\mu|)}{2}+\frac{K(x)}{6\mu}\Big]+\tilde{R}_\mu(x,y)
\end{align}
holds for the resolvent kernel (see Theorem 2.7, \cite{Fay2}). Here $K$ is the Gaussian curvature of the metric $g$, and the remainder $(x,y)\mapsto\tilde{R}_\mu(x,y)$ is a continuous function obeying $\tilde{R}_\mu(x,x)=O(|\mu|^{-2})$ as $\Re\mu\to-\infty$ uniformly in $x$. In particular, we have
\begin{equation}
\label{psi asymp smooth}
\psi_\mu(x)=\frac{\mu\,{\rm log}(4|\mu|)}{4\pi}+\frac{\mu\gamma}{2\pi}+\psi_\infty(x)+O(|\mu|^{-2}), \qquad \Re\mu\to-\infty,
\end{equation}
where the constant term $\psi_\infty(x)$ is given by
\begin{equation}
\label{psi infty smooth}
\psi_\infty(x)=\frac{K(x)}{12\pi}-\frac{1}{A}.
\end{equation}

\subsubsection*{\bf Variation of $\zeta_{\Delta}(s)$.} In view of the residue theorem, we have
$$(s-1)\lambda_k^{-s}=\frac{1}{2\pi i}\int_\Gamma\frac{\mu^{1-s}d\mu}{(\lambda_k-\mu)^{2}},$$
where $\Gamma$ is the contour enclosing the cut $(-\infty,0]$. Making summation over $k$, we arrive at
\begin{equation}
\label{zeta of s via zeta of 2}
(s-1)\dot{\zeta}_{\Delta}(s)=\int_\Gamma\dot{\zeta}_{\Delta-\mu}(2)\frac{\mu^{1-s}d\mu}{2\pi i}
\end{equation}
(for $\Re s\le 1$, both sides of this formula should be understood as analytic continuations of them from the half-plane $\Re s>1$).

Now, we make use of the following lemma (see Lemma 5.1, \cite{KokKorWZ}).
\begin{lemma}
\label{magiclemma}
Let $\Psi$ be a function holomorphic in some neighborhood of $(-\infty,0]$ containing the curve $\Gamma$. Suppose that the asymptotics
\begin{equation}
\label{F asymp}
\Psi(\mu)=\sum_{k=1}^K (\Psi_k+\tilde{\Psi}_k\mu{\rm log}(-\mu))\mu^{r_k}+\Phi(\mu)
\end{equation}
is valid as $\Re\mu\to -\infty$, where $r_k\in\mathbb{R}$, $\Psi_k,\tilde{\Psi_k}\in\mathbb{C}$, and $|\mu^k\partial^k_\mu\Phi(\mu)|=O(\mu^{\kappa})$ for some $\kappa<0$ and all $k=0,1,\dots$. Denote by $\Psi(\infty)$ and $\tilde{\Psi}(\infty)$ the constant term and the coefficient at ${\rm log}(-\mu)$ in {\rm(\ref{F asymp})}. 

Let $\widehat{\Psi}$ be the analytic continuation of the integral
$$\widehat{\Psi}(s):=\int\limits_{\Gamma}\partial_\mu^2\Psi(\mu)\,\frac{\mu^{1-s}d\mu}{2\pi i}$$
initially defined for sufficiently large $\Re s$. Then $\widehat{\Psi}$ is holomorphic at $s=0$ and
$$\widehat{\Psi}(0)=\tilde{\Psi}(\infty), \qquad \partial_s\widehat{\Psi}(0)=\Psi(\infty)-\tilde{\Psi}(\infty)-\Psi(0).$$

In particular, the function $s\mapsto\eta(s):=\widehat{\Psi}(s)/(s-1)$ obeys
$$\eta(0)=-\tilde{\Psi}(\infty), \qquad -\partial_s\eta(0)=\Psi(\infty)-\Psi(0).$$
\end{lemma}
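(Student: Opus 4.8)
The plan is to evaluate the contour integral $\widehat{\Psi}(s)=\int_\Gamma \partial_\mu^2\Psi(\mu)\,\mu^{1-s}d\mu/(2\pi i)$ term by term using the asymptotic expansion \eqref{F asymp}, separating the explicit ``model'' terms $(\Psi_k+\tilde\Psi_k\mu\log(-\mu))\mu^{r_k}$ from the fast-decaying remainder $\Phi$. For the remainder, the bounds $|\mu^k\partial_\mu^k\Phi|=O(\mu^\kappa)$ with $\kappa<0$ guarantee that $\int_\Gamma\partial_\mu^2\Phi(\mu)\mu^{1-s}d\mu$ converges and is holomorphic in a half-plane $\Re s>\kappa$ containing $s=0$; moreover, integrating by parts twice (the boundary terms at infinity vanish because of the decay) shows this contribution and its $s$-derivative both vanish at $s=0$, since the factor $\mu^{1-s}$ and its $\mu$-antiderivatives produce factors of $s$ and $(s-1)$ that one can track. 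So $\Phi$ contributes nothing to $\widehat\Psi(0)$ or $\partial_s\widehat\Psi(0)$; only the finitely many model terms matter.

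For a single model term I would compute $J_{r,s}:=\int_\Gamma\partial_\mu^2(\mu^r)\,\mu^{1-s}d\mu/(2\pi i)=r(r-1)\int_\Gamma\mu^{r-1-s}d\mu/(2\pi i)$ and, similarly, the logarithmic variant $\int_\Gamma\partial_\mu^2(\mu^{r+1}\log(-\mu))\,\mu^{1-s}d\mu/(2\pi i)$. The key tool is the Hankel-type contour identity $\frac{1}{2\pi i}\int_\Gamma\mu^{-w}d\mu=\frac{1}{\Gamma(w)\Gamma(1-w)}\cdot(\text{sign})=\frac{\sin\pi w}{\pi}$ type formula, valid as an analytic identity in $w$; here $w=s-r+\text{const}$. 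The upshot is that each model term with $r_k\ne$ the critical exponents contributes a function of $s$ that is holomorphic at $s=0$ and vanishes there together with its derivative, because $\frac{\sin\pi w}{\pi}$ has a zero whenever $w$ is an integer, and the exponents are arranged so that at $s=0$ the relevant $w$ hits an integer --- the only survivors are the constant term ($r_k=0$, giving $\Psi(\infty)$) and the term proportional to $\mu\log(-\mu)$ (giving $\tilde\Psi(\infty)$). Differentiating $\frac{\sin\pi w}{\pi}$ in $s$ at those integer points produces the $\pm1$ coefficients and, for the $\log$ term, an extra factor that, combined with $\partial_\mu^2[\mu\log(-\mu)\cdot\mu^{r}]$, yields the stated mixture $\Psi(\infty)-\tilde\Psi(\infty)-\Psi(0)$; the $-\Psi(0)$ piece appears because the term $\Psi_k\mu^{r_k}$ with $r_k$ such that $\mu^{r_k}$ is \emph{constant}, i.e. $r_k=0$, contributes to $\Psi(0)$ as well (evaluating the expansion at finite $\mu$), and one must subtract it to isolate the genuine constant-at-infinity data --- this bookkeeping is the subtle part.

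The main obstacle I anticipate is precisely this careful bookkeeping of which terms are ``constant'' versus ``decaying'' and the sign/normalization of the Hankel contour integrals: one has to be scrupulous about the branch of $\mu^{1-s}$ on $\Gamma$ (the cut along $(-\infty,0]$), the orientation of $\Gamma$, and the fact that $\Psi(0)$ means the value of the function $\Psi$ itself at $\mu=0$, whereas $\Psi(\infty)$ and $\tilde\Psi(\infty)$ are the coefficients read off from the $\Re\mu\to-\infty$ expansion \eqref{F asymp}. Getting a clean statement requires noticing that $\partial_\mu^2$ annihilates affine functions of $\mu$, so terms $\Psi_k\mu^{r_k}$ with $r_k\in\{0,1\}$ are killed before integration --- which is exactly why those exponents produce the ``constant term'' and the ``$\log$ term'' as residual data rather than as honest integrals. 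Once the per-term contributions are tabulated, summing them gives $\widehat\Psi(0)=\tilde\Psi(\infty)$ and $\partial_s\widehat\Psi(0)=\Psi(\infty)-\tilde\Psi(\infty)-\Psi(0)$; the final assertion about $\eta(s)=\widehat\Psi(s)/(s-1)$ is then immediate from the quotient rule, using $\eta(0)=-\widehat\Psi(0)$ and $\partial_s\eta(0)=-\partial_s\widehat\Psi(0)-\widehat\Psi(0)$ (the $1/(s-1)$ factor contributes $-1$ to the logarithmic derivative at $s=0$), which collapses to $-\tilde\Psi(\infty)$ and $\Psi(\infty)-\Psi(0)$ respectively.
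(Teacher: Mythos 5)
Your plan founders on precisely the bookkeeping you identify as the crux, and in two places the claims you make are demonstrably false. First, the assertion that the remainder $\Phi$ contributes nothing to $\widehat{\Psi}(0)$ and $\partial_s\widehat{\Psi}(0)$ is wrong. Integrating by parts twice would give a contribution of the form $s(s-1)\cdot\frac{1}{2\pi i}\int_\Gamma\Phi(\mu)\mu^{-1-s}d\mu$ (plus boundary/jump terms you are not tracking, see below); the prefactor kills the value at $s=0$ but not the derivative. A concrete counterexample: $\Psi(\mu)=(\lambda-\mu)^{-1}$ with $\lambda>0$ satisfies the hypotheses with no model terms at all, and a residue computation gives $\widehat{\Psi}(s)=s(s-1)\lambda^{-1-s}$, hence $\partial_s\widehat{\Psi}(0)=-1/\lambda=-\Psi(0)\neq 0$. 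So the decaying part is exactly where (part of) the $-\Psi(0)$ comes from, and it cannot be discarded. Second, your attribution of $\Psi(\infty)$ and $-\Psi(0)$ to the constant model term cannot work: the term $\Psi_k\mu^{0}$ is annihilated by $\partial_\mu^2$ and therefore contributes identically zero to $\widehat{\Psi}(s)$; moreover $\Psi(0)$ is the value of the function at $\mu=0$, which an asymptotic expansion valid only as $\Re\mu\to-\infty$ knows nothing about, so ``evaluating the expansion at finite $\mu$'' is not a legitimate step. (A further slip: $\tilde{\Psi}(\infty)$ is the coefficient of the pure $\log(-\mu)$ term, i.e.\ $r_k=-1$, not of $\mu\log(-\mu)$; the $\mu\log(-\mu)$ coefficient drops out of the final formulas.) Finally, the function-splitting itself is delicate: $\log(-\mu)$ and the non-integer powers have a branch point at $\mu=0$ and a cut crossing $\Gamma$ on the positive axis, so neither the model part nor $\Phi$ is single-valued along the whole contour, and the naive Hankel evaluations (which, for a pure power sharing the branch of $\mu^{1-s}$, actually give identically zero, since the integrand is exact with vanishing endpoint values) miss the monodromy and jump contributions that in fact produce $\tilde{\Psi}(\infty)$.

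The paper avoids all of this by splitting the contour rather than the function. Writing $\widehat{\Psi}(s)=\pi^{-1}e^{-i\pi s}\sin(\pi s)J_\infty(s)+J_0(s)$, with $J_0$ the integral over the circle $|\mu|=\epsilon$ and $J_\infty$ the ray integral, one gets $J_0(0)=0$ and, after two integrations by parts on the circle, the term $\Psi(0)$ appears in $\partial_sJ_0(0)$ as the residue of $\Psi(\mu)/\mu$ at the origin --- it is a feature of the full function near $\mu=0$, untouched by the expansion at $-\infty$. The asymptotics is used only to continue $\int_{-\infty}^{-\epsilon}\Psi(\mu)\mu^{-s-1}d\mu$ meromorphically; its polar part at $s=0$, namely $-\tilde{\Psi}(\infty)/s^2-(\Psi(\infty)+\pi i\tilde{\Psi}(\infty))/s$, combines with the zero of $\sin(\pi s)$ to yield $\widehat{\Psi}(0)=\tilde{\Psi}(\infty)$ and the $\Psi(\infty)-\tilde{\Psi}(\infty)$ part of the derivative, while the $\epsilon$-dependent boundary terms cancel between $J_0$ and $J_\infty$. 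Your closing reduction from $\widehat{\Psi}$ to $\eta(s)=\widehat{\Psi}(s)/(s-1)$ is correct, but it rests on the two identities that the proposal, as written, does not establish.
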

For the convenience of the reader, Lemma \ref{magiclemma} is proved in Appendix A.

Let us substitute $\Psi(\mu)=\dot{\zeta}_{\Delta-\mu}(2)$ into Lemma \ref{magiclemma}. Then formulas (\ref{psi asymp smooth}), (\ref{psi infty smooth}), and (\ref{zeta of 2}) provide asymptotics (\ref{F asymp}) with 
$$\Psi(\infty)=\int_X\psi_\infty\dot{\phi}dS, \qquad \tilde{\Psi}(\infty)=0,$$  
while (\ref{zeta of s via zeta of 2}) implies $\eta(s)=\dot{\zeta}_{\Delta}(s)$. Thus, Lemma \ref{magiclemma} yields
$$\partial_t({\rm log\,det}\Delta)=\int_X[\psi_\infty-\psi_0]\dot{\phi}dS=\frac{1}{12\pi}\int_X K\dot{\phi}dS+\int_X \frac{-\dot{\phi}dS}{A}.$$
Since $d\dot{S}=-\dot{\phi}dS$, the last term in the right-hand side coincides with $\partial_t({\rm log}A)$. So, we have arrived to the infinitesimal version of the Polyakov formula
$$\partial_t{\rm log}\,({\rm det}\Delta/A)=\frac{1}{12\pi}\int_X K\dot{\phi}dS.$$

\section{Infinitesimal Polyakov's formula for polyhedral metrics on sphere}
Now, we derive the analogue of the infinitesimal Polyakov formula for the determinant of the Laplacian $\Delta=-4e^{\phi}\partial_z\partial_{\overline{z}}$ on the sphere endowed with polyhedral metric (the metric that is flat outside the finite number of conical singularities). Each such metric is of the form 
\begin{equation}
\label{polyhedral metric}
m=e^{-\phi}|dz|^2=C\prod_{j=1}^M |z-z_j|^{2b_j}|dz|^2,
\end{equation} 
where $z_k$-s are positions of the conical points while $\beta_k=2\pi(b_k+1)$ are the corresponding conical angles; then
$$\phi=-2\Re\Big(\sum_{j=1}^M b_j{\rm log}(z-z_j)\Big)-{\rm log}C.$$
For simplicity and without loss of generality, we assume that $z=\infty$ is not a conical point; then $\sum_k b_k=-2$ (the Gauss-Bonnet formula).

We consider the variations of the positions of the vertices $z_i$
\begin{equation}
\label{vert shift}
t=z_i, \qquad \dot{\phi}:=\frac{\partial\phi}{\partial z_i}=\frac{b_i}{z-z_i}
\end{equation}
or their conical angles
\begin{equation}
\label{vert opening}
t=\beta_i, \qquad \dot{\phi}:=\frac{\partial\phi}{\partial\beta_i}=\frac{1}{\pi}{\rm log}\Big|\frac{z-z_1}{z-z_i}\Big|
\end{equation}
(in the last formula, the constraint $\dot{\beta}_1=-\dot{\beta}_i=-1$ is imposed to preserve the equality $\sum_k b_k=-2$), or the overall scaling factor
\begin{equation}
\label{scaling}
t=C, \qquad \dot{\phi}:=-1/C.
\end{equation}

For sufficiently small $\epsilon>0$, denote $\mathbb{K}_j(\epsilon):=\{z\in X \ | \ d(z,z_j)\le\epsilon\}$ of conical points $z_j$. Introduce the (multi-valued) function
\begin{equation}
\label{conical coordinates}
\xi_{j}:=\sqrt{C}\int_{z_j}^z\prod_{k=1}^K (\varkappa-z_k)^{b_k}d\varkappa;
\end{equation}
then $m=|d\xi_j|^2$. Denote $r_j:=|\xi_j|$ and $\varphi_j:={\rm arg}\xi_j$. Note that the function $\zeta_j:=\xi_j^{1/(b_j+1)}$ is single-valued and is a local coordinate near $z_j$ obeying $m=|d(\zeta_j^{b_j+1})|^2$. Note that the coordinate $\zeta_j$ is ``comoving'', i.e., it depends on $t$ while $\zeta_j(z_j)=0$ for any $t$. Introduce the map $\mathscr{Z}^{(j)}_t:\,(0,\epsilon)\times\big(\mathbb{R}/\beta_j\mathbb{Z}\big)\to \mathbb{K}_j(\epsilon)$ by 
\begin{equation}
\label{loc coord map}
\mathscr{Z}^{(j)}_t(r_j,\varphi_j):=\zeta^{-1}_j(r^{2\pi/\beta_j}e^{2\pi i\varphi_j/\beta_j}).
\end{equation}
Also, put $\mathfrak{T}_{\beta',\beta}(r,\varphi+\beta\mathbb{Z}):=(r,\beta'\beta^{-1}\varphi+\beta'\mathbb{Z}).$

\subsubsection*{\bf Variation of individual eigenvalues.} 
Let $t\mapsto\lambda_k(t)$ ($k=1,\dots$) be families of the nonzero eigenvalues of $\Delta=\Delta_t$ counted with their multiplicities in such a way that $\lambda_1(0)\le \lambda_2(0) \le \dots$; let also $t\mapsto \{u_k(\cdot,t)\}_{k=1,2,\dots}$ be the corresponding family of orthonormal bases of eigenfunctions. First, we prove that the family of $t\mapsto(\lambda_k(t),u_k(\cdot,t)$ is  differentiable in $t$ as long as $\lambda_k(t)$ is simple. To this end, we apply the technique of the theory of elliptic problems in singularly perturbed domains (see Chapters 4 and 6, \cite{MNP}).
\begin{lemma}
\label{SPD lemma}
Suppose that the eigenvalue $\lambda(t)$ of $\Delta_t$ is simple for all $t\in(-t_0,t_0)$. Then the following statements hold:
\smallskip
\begin{enumerate}
\item the corresponding family of normalized eigenfunction $(x,t)\mapsto u(x,t)$ can be chosen to be smooth outside the vertices $(z_k(t),t)$, where $x$ is an arbitrary smooth coordinate on the sphere. As a corollary, differentiating the equation $(\Delta-\lambda_k)u=0$ in $t=z_i,\beta_i$, one shows that equation {\rm(\ref{variation of eigenfucntions})} still holds outside vertices.
\item For each $t$, the solution $u=u(\cdot,t)$ to $(\Delta-\lambda)u=0$ admits the expansion into convergent series
\begin{equation}
\label{eigenfunc expansion near vert}
\begin{split}
u\circ\mathscr{Z}^{(j)}_t(r,\varphi)=\frac{1}{\beta\epsilon}\sum_{k\in\mathbb{Z}}\frac{J_{2\pi|k|/\beta}(r\sqrt{\lambda})}{J_{2\pi|k|/\beta}(\epsilon\sqrt{\lambda})}e^{2\pi k i\varphi/\beta}(f,e^{2\pi k i\varphi/\beta})_{L_2(\partial \mathbb{K}(\epsilon))}=\\
=\sum_{k\in\mathbb{Z}}\sum_{j=0}^{\infty}c_{j}\Big(\frac{2\pi k}{\beta}\Big)\,(f,e^{2\pi k i\varphi/\beta})_{L_2(\partial \mathbb{K}(\epsilon))}(r\sqrt{\lambda})^{2j+2\pi |k|/\beta}e^{2\pi k i\varphi/\beta}
\end{split}
\end{equation}
in $\mathbb{K}_j(\epsilon)$, where $J_\cdot$ is the Bessel function, $\beta=\beta_j$, and $f(\varphi)=u\circ\mathscr{Z}^{(j)}_t(\epsilon,\varphi)$. The coefficients $c_{j}(\nu)$ $(j>0)$ in {\rm(\ref{eigenfunc expansion near vert})} and their derivatives with respect to $\nu$ decay super-exponentially as $j\to+\infty$ or $\nu\to+\infty$. Thus, series {\rm(\ref{eigenfunc expansion near vert})} admit term-wise differentiation in $t,r,\varphi$. In addition, 
\begin{align}
\label{dot u near vertex est}
\begin{array}{lr}
\dot{u}\circ\mathscr{Z}^{(j)}_t=O(1), \ \frac{\partial\dot{u}}{\partial r}\circ\mathscr{Z}^{(j)}_t=\sum\limits_{\pm}c_\pm \frac{e^{\pm 2\pi i\varphi/\beta}}{r}+O(r^{\delta-1}) & (t=z_j),\\
\dot{u}\circ\mathscr{Z}^{(j)}_t=O(r^{\frac{2\pi}{\beta}}{\rm log}r), \ \frac{\partial\dot{u}}{\partial r}\circ\mathscr{Z}^{(j)}_t=O(r^{\frac{2\pi}{\beta}-1}{\rm log}r) & (t=\beta_j),
\end{array}
\end{align}
as $r\to 0$, where $\delta>0$.
\end{enumerate}
\end{lemma}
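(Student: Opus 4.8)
The plan is to prove the two assertions separately, each resting on one classical input --- Kato's perturbation theory on a fixed Hilbert space for (1), and the Fourier--Bessel (Kondrat'ev) description of solutions of the Helmholtz equation near a conical point for (2) --- while the singular-perturbation technique of \cite{MNP} is exactly what reconciles them. For (1), the structural point is that in two dimensions the Dirichlet integral $\int_X du\wedge *d\overline{u}=\int_{\overline{\mathbb{C}}}|\nabla u|^2\,dx\,dy$ is conformally invariant, so $\Delta_t u=\lambda u$ is the generalized eigenvalue problem $\int_{\overline{\mathbb{C}}}|\nabla u|^2=\lambda\int_{\overline{\mathbb{C}}}|u|^2 e^{-\phi_t}$ on the $t$-independent form domain $H^1(\overline{\mathbb{C}})$, the weight $e^{-\phi_t}$ being smooth in $t$ away from the conical points and $t$-dependent near them only through the position (for $t=z_i$) or the exponent (for $t=\beta_i$) of a singularity. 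I would remove this $t$-dependence of the singular locus by the comoving change of variables: the local coordinate $\zeta_j$ keeps the vertex at $\zeta_j=0$ for all $t$, so gluing the $\zeta_j$'s near the $z_j$'s to the identity in the bulk by a diffeomorphism $\Phi_t$ of $\overline{\mathbb{C}}$ that is smooth in $(x,t)$ turns $\{\Delta_t\}$ into a family of self-adjoint operators with compact resolvent on the fixed space $H^1(\overline{\mathbb{C}})$, with $t$-smooth coefficients; the role of Chapters~4 and 6 of \cite{MNP} is to verify that this family is smooth in $t$ in the sense required by perturbation theory, which amounts to controlling, uniformly in $t$, the Kondrat'ev asymptotics of the coefficients and solutions near the (now fixed) conical points. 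Kato's theorem then gives that $\lambda(t)$ and the normalized eigenfunction are smooth in $t$ on any interval of simplicity, interior elliptic regularity upgrades this to joint smoothness of $(x,t)\mapsto u(x,t)$ away from the vertices, and differentiating $(\Delta-\lambda)u=0$ there, with $\dot{\Delta}=\dot{\phi}\Delta$, gives (\ref{variation of eigenfucntions}).

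For (2), I work in the geodesic polar coordinates attached to $z_j$, $r=|\xi_j|$ and $\varphi=\arg\xi_j\in\mathbb{R}/\beta_j\mathbb{Z}$, in which $m=dr^2+r^2\,d\varphi^2$ and $\Delta$ is the flat Laplacian on the model cone. Separation of variables and regularity at the tip force the eigenfunction to equal $\sum_{k\in\mathbb{Z}}a_k J_{2\pi|k|/\beta}(r\sqrt{\lambda})e^{2\pi ki\varphi/\beta}$, and matching the trace at $r=\epsilon$ with $f=u\circ\mathscr{Z}^{(j)}_t(\epsilon,\cdot)$ fixes $a_k$, which is the first line of (\ref{eigenfunc expansion near vert}); the second line is the term-by-term Taylor expansion of the Bessel functions, from which $c_j(\nu)$ is read off. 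This is legitimate once $\epsilon$ is fixed --- uniformly over a compact $t$-interval, using continuity of $\lambda(t)$ --- so small that $\epsilon\sqrt{\lambda}$ stays below the first positive zero $j_{0,1}$ of $J_0$; since the first positive zero $j_{\nu,1}$ increases with $\nu\ge 0$, this gives $J_{2\pi|k|/\beta}(\epsilon\sqrt{\lambda})>0$ for all $k$ with a uniform lower bound. Together with $J_\nu(z)^{-1}=O(\Gamma(\nu+1)(2/z)^\nu)$ as $\nu\to+\infty$ and the factor $1/(j!\,\Gamma(\nu+j+1))$ in the numerator, this yields super-exponential decay of $c_j(\nu)$ as $j\to+\infty$ or $\nu\to+\infty$, and since $\partial_\nu$ only produces digamma/logarithmic factors the same holds for the $\nu$-derivatives --- which justifies term-wise differentiation of the series in $t,r,\varphi$.

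The estimates (\ref{dot u near vertex est}) then follow by differentiating the representation $u(x,t)=V\big(r(x,t),\varphi(x,t);t\big)$ in $t$ (with $V$ the convergent series above, smooth in $(r,\varphi,t)$ for $r$ small), using the chain rule and the explicit $t$-dependence of the comoving coordinates $r(x,t),\varphi(x,t)$. For $t=z_j$ the $z_j$-derivatives of $(r,\varphi)$ blow up as $r\to 0$ while the radial derivatives of $V$ vanish, and the two effects balance (with cancellations) to leave $\dot u\circ\mathscr{Z}^{(j)}_t=O(1)$ and $\partial_r\big(\dot u\circ\mathscr{Z}^{(j)}_t\big)=\sum_{\pm}c_\pm e^{\pm 2\pi i\varphi/\beta}/r+O(r^{\delta-1})$; for $t=\beta_j$ the coordinates vary smoothly and the only singular effect is the resonance at the exponent $2\pi/\beta$, which produces the logarithmic factor $O(r^{2\pi/\beta}\log r)$. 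Equivalently one may use the equation $(\Delta-\lambda)\dot u=\lambda(\dot\kappa-\dot\phi)u$ from (1) and read the asymptotics of $\dot u$ near $z_j$ off the Kondrat'ev classification of solutions of the inhomogeneous Helmholtz equation on the cone, the comoving normalization fixing the otherwise free homogeneous components.

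The main obstacle is part (1): making the reduction to a fixed Hilbert space precise and, above all, verifying the smooth $t$-dependence of the pulled-back operator family uniformly up to the conical points --- where the bulk of the singular-perturbation analysis, and the content of the appendices, is spent. The near-vertex expansion of part (2) is in fact an ingredient of that verification, so the two parts are intertwined; once the Bessel representation and the $\dot u$-equation are in hand, the remaining bookkeeping for (\ref{eigenfunc expansion near vert}) and (\ref{dot u near vertex est}) is lengthy but essentially a direct computation.
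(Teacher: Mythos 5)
Your part (2) is essentially the paper's own argument: expansion (\ref{eigenfunc expansion near vert}) as the exact solution of the model Dirichlet problem on the cone $\mathbb{K}(\epsilon)$ with trace $f$ at $r=\epsilon$, super-exponential decay of the Bessel--Taylor coefficients via Stirling, term-wise differentiation once the $t$-smoothness of $f_t$ is known from part (1), and then the chain rule with $\partial\xi_j/\partial z_j=O(r^{b_j/(b_j+1)})=O(r^{1-2\pi/\beta_j})$ and $\partial\xi_j/\partial\beta_j=O(\log r)$ to get (\ref{dot u near vertex est}); the exponent cancellation $r^{2\pi/\beta-1}\cdot r^{1-2\pi/\beta}=O(1)$ is exactly the mechanism used in Appendix \ref{SPD theory} (your ``coordinate derivatives blow up, radial derivatives of $V$ vanish'' is backwards when $\beta<2\pi$, but the product estimate is what matters). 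For part (1) you take a genuinely different route: you fix the singular locus by a global comoving diffeomorphism, use conformal invariance of the Dirichlet integral so that only the area weight varies, and invoke Kato-type perturbation theory for the resulting form pencil, whereas the paper excises the cone neighborhoods, encodes them by the Dirichlet-to-Neumann maps (\ref{DN cone}) on the circles $\partial\mathbb{K}_j(\epsilon)$, transfers the exterior problem to a fixed domain, builds an explicit perturbation series for the resolvent \`a la \cite{MNP}, and recovers $\lambda(t),u(\cdot,t)$ from the spectral projector $\mathcal{F}_t$. Your route is more abstract and arguably shorter; the paper's buys, as a by-product, smoothness of the resolvent kernel in all parameters, which is what Lemma \ref{SPD lemma mult} and Section \ref{Justifisection} actually need later.

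Two points in your sketch need repair, though neither is fatal. First, for the angle variation $t=\beta_i$ the comoving pullback does \emph{not} produce ``$t$-smooth coefficients'': near the fixed vertex the weight is $(b_i+1)^2|\zeta_i|^{2b_i}$ with a $t$-dependent exponent, so its $t$-derivative carries the same logarithmic singularity as $\dot{\phi}$ in (\ref{vert opening}); moreover the weighted $L_2$ space itself varies, so you are dealing with a form pencil rather than a family of operators on a fixed Hilbert space, and you must verify relative form-boundedness of the log-singular perturbation (easy in 2D via $H^1\subset L^p$, but it is an argument, not a citation of smooth-coefficient perturbation theory). Second, your claim that $J_{2\pi|k|/\beta}(\epsilon\sqrt{\lambda})$ admits a uniform positive lower bound over all $k$ is false (these denominators decay super-exponentially in $|k|$); what one actually uses is the bound on the ratio $J_\nu(r\sqrt{\lambda})/J_\nu(\epsilon\sqrt{\lambda})\le C(r/\epsilon)^\nu$ together with the rapid decay of the Fourier coefficients of the smooth trace $f$ (or, as in the paper, one simply perturbs $\epsilon$ so that $\lambda$ avoids the Dirichlet spectrum of the model cone). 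With these corrections your outline can be completed into a valid proof.
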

To make the exposition self-contained, the sketch of the proof of Lemma \ref{SPD lemma} based on the usual perturbation theory is presented in Apppendix \ref{SPD theory}.

Let $X(\epsilon)$ be the complement of all $\mathbb{K}_j(\epsilon)$ in $X$. Multiplying both parts of equation (\ref{variation of eigenfucntions}) (provided by Lemma \ref{SPD lemma}, 1.) by $u_k$, integrating over $X(\epsilon)$, one arrives at
\begin{align}
\label{integrat b p limit}
\begin{split}
\lambda_k\lim_{\epsilon\to 0}\int_{X(\epsilon)}(\dot{\kappa}_k-\dot{\phi})|u_k|^2dS=&\lim_{\epsilon\to 0}\int_{X(\epsilon)}(\Delta-\lambda_k)\dot{u}_k u_k dS=\\
=&\lim_{\epsilon\to 0}\int_{\partial X(\epsilon)}[\partial_\nu\dot{u}u-\dot{u}\partial_\nu u]dl=0.
\end{split}
\end{align}
Indeed, if $t=z_j$, then the last integral in (\ref{integrat b p limit}) is equal to
$$\int\limits_{0}^{\beta_j}
\Big(\sum_{\pm}c_\pm e^{\frac{\pm 2\pi i\varphi}{\beta_j}}\Big)u(z_j)d\varphi_j+o(1)=0+o(1)$$
due to (\ref{dot u near vertex est}) and (\ref{eigenfunc expansion near vert}). The same fact for $t=\beta_j$ is obtained even more simply from (\ref{dot u near vertex est}), (\ref{eigenfunc expansion near vert}) and (\ref{integrat b p limit}).

Therefore, in the polyhedral case, one again arrives to (\ref{variation of eigenvalues}), where the principal value should be taken in the right-hand side, 
\begin{equation}
\label{variation of eigenvalues conic}
\dot{\kappa}_k={\rm p.v.}\int_X\dot{\phi}u_k^2dS,
\end{equation}
Indeed, the asymptotics of the integrand in (\ref{variation of eigenvalues}) at the vertex $z=z_i$ is given by
$$\dot{\phi}u_k^2dS=b_i u_k^2[(z-z_i)^{-1}+O(1)]r_idr_id\varphi_i=c[\xi_i^{-\frac{2\pi}{\beta_i}}J_{0}(r_i\sqrt{\lambda_k})^2+O(1)]r_idr_id\varphi_i$$
where the singular term $c\xi_i^{-\frac{2\pi}{\beta_i}}J_{0}(r_i\sqrt{\lambda_k})^2 r_i$ is killed after the integration in $\varphi_i$. Therefore, formula (\ref{variation of eigenvalues}) remains valid after taking the principal value in the right-hand side.

Now, consider the case in which $\lambda_k(0)=\dots=\lambda_{k+m-1}(0)=:\lambda(0)$. For this case, we prove the formula
\begin{equation}
\label{variation of eigenvalues conic multip}
\begin{split}
\partial_t\Big(\sum_{j=0}^{m-1}\kappa_{k+j}\Big)\Big|_{t=0}=&\mathscr{I}\,{\rm p.v.}\int_X\dot{\phi}\Big(\sum_{j=0}^{m-1}u_{k+j}^2\Big)dS=\\
&=\mathscr{I}\,{\rm p.v.}\int_X\dot{\phi}\,\,\underset{\mu=\lambda(0)}{\rm Res}R_\mu(x,y)\Big|_{y=x}dS.
\end{split}
\end{equation}
Here $\mathscr{I}$ in the right-hand side is the operator eliminating removable discontinuities, $\mathscr{I}f(t)=\lim\limits_{\tau\to+0}\int_{t-\tau/2}^{t+\tau/2}f(t')dt'$.
\begin{lemma}
\label{SPD lemma mult}
Let $\lambda_k,\dots,\lambda_{k+m-1}$ be all the eigenvalues of $\Delta$ taking the value $\lambda(0)$ for $t=0$. Then
\begin{enumerate}
\item For any function $\mathcal{E}$ holomorphic near $\lambda(0)$ and for sufficiently small $t\in(-t_0,t_0)$, the function
\begin{equation}
\label{F residue func}
(x,y,t)\mapsto\mathcal{F}_t(x,y|\mathcal{E}):=\sum_{j=0}^{m-1}\mathcal{E}(\lambda_{k+j}(t))u_{k+j}(x)u_{k+j}(y)
\end{equation}
is smooth outside the vertices $(z_k(t),z_{k'}(t),t)$. 
\item In particular, the function $$\sum_{j=0}^{m-1}\kappa_{k+j}=\int_{X}\mathcal{F}_t(x,x|1)dS$$ is differentiable in $t\in(-t_0,t_0)$ and formula {\rm(\ref{variation of eigenvalues conic multip})} is valid.
\end{enumerate}
\end{lemma}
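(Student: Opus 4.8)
The plan is to deduce both assertions from the Riesz--Dunford representation of the cluster together with a reduction of the operator family near the vertices. Fix a small circle $\gamma$ around $\lambda(0)$, lying inside the domain of holomorphy of $\mathcal E$, which for all $|t|<t_0$ separates the cluster $\lambda_k(t),\dots,\lambda_{k+m-1}(t)$ from the rest of $\mathrm{spec}\,\Delta_t$; this is legitimate because the eigenvalues depend continuously on $t$ (a by-product of the perturbation analysis behind Lemma~\ref{SPD lemma}, or of the min--max principle applied to the metrics $m_t$). Then
$$\mathcal F_t(x,y|\mathcal E)=\frac{1}{2\pi i}\oint_\gamma\mathcal E(\mu)\,R_{\mu,t}(x,y)\,d\mu,$$
so part~1 reduces to the joint $C^\infty$-smoothness of $(x,y,t)\mapsto\oint_\gamma\mathcal E(\mu)R_{\mu,t}(x,y)\,d\mu$ off the vertices, with control uniform in $\mu\in\gamma$.

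Spatial smoothness off the vertices is local and classical: there $\Delta_t$ is a genuine smooth elliptic operator, so $R_{\mu,t}$ is $C^\infty$ off the diagonal and on it equals $-\tfrac{1}{2\pi}\log d(x,y)+(\text{smooth})$, with $d$ the geodesic distance of $m_t$ and $\mu$-independent log coefficient; since $\mathcal E$ is holomorphic inside $\gamma$, $\oint_\gamma\mathcal E(\mu)\,d\mu=0$ kills this term. For the $t$-dependence I would follow the singular-perturbation scheme of \cite{MNP} already used for Lemma~\ref{SPD lemma}: on each $\mathbb K_j(\epsilon)$ represent $R_{\mu,t}$ by a Poisson-type series of the form \eqref{eigenfunc expansion near vert} (whose $t$-dependence enters only through $\beta_j(t)$ in the Bessel orders, hence smooth and term-wise differentiable by the decay of the coefficients $c_j(\nu)$) and glue it with the smooth-surface resolvent on $X(\epsilon)$ along $\partial\mathbb K_j(\epsilon)$; tracking the $t$-dependence through this gluing yields smoothness of $R_{\mu,t}$ in $t$, uniformly in $\mu\in\gamma$, away from the vertices. (For the position and scaling variations $t=z_i,C$ the cone angles are fixed, so one may alternatively conjugate $\Delta_t$ by a $t$-family of diffeomorphisms to a single Friedrichs form domain and invoke classical analytic perturbation theory of type~(A).) An interior elliptic bootstrap applied to the $t$-derivatives (again finite sums of products of resolvents) then upgrades separate smoothness in $(x,y)$ and in $t$ to joint $C^\infty$-smoothness off the vertices, which is part~1.

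For part~2, once part~1 holds with $\mathcal E\equiv1$ and with $\mathcal E=\log$ (admissible because $\lambda(0)>0$), the function $t\mapsto\sum_{j=0}^{m-1}\kappa_{k+j}(t)=\int_X\mathcal F_t(x,x|\log)\,dS$ is differentiable. I would compute its derivative by the argument of Section~\ref{sec smooth Polyakov} carried out on $X(\epsilon)$: on a $t$-subinterval on which the cluster eigenvalues happen to be simple, \eqref{variation of eigenvalues conic} gives $\partial_t\sum_j\kappa_{k+j}(t)=\mathrm{p.v.}\int_X\dot\phi\sum_j u_{k+j}(\cdot,t)^2\,dS$, obtained by differentiating $(\Delta-\lambda_{k+j})u_{k+j}=0$ in $t$, multiplying by $u_{k+j}$, summing over $j$, integrating over $X(\epsilon)$ and integrating by parts; the boundary integrals over $\partial\mathbb K_j(\epsilon)$ vanish as $\epsilon\to0$ by \eqref{dot u near vertex est} and \eqref{eigenfunc expansion near vert}, via the cancellation of the lowest angular modes already used in \eqref{integrat b p limit}. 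Letting $t\to0$ through such subintervals produces the averaging operator $\mathscr I$ of \eqref{variation of eigenvalues conic multip}, which is harmless since $\sum_j u_{k+j}^2$ is already continuous in $t$ by part~1; the principal value at each $z_i$ is justified as in the derivation of \eqref{variation of eigenvalues conic}, the non-integrable part of $\dot\phi\sum_j u_{k+j}^2\,dS$ there being proportional to $\xi_i^{-2\pi/\beta_i}$ times a radial factor and hence annihilated by the angular integration. The second equality in \eqref{variation of eigenvalues conic multip} is then the identification of $\sum_{j=0}^{m-1}u_{k+j}(x)u_{k+j}(y)$ at $t=0$ with $\bigl[\underset{\mu=\lambda(0)}{\mathrm{Res}}\,R_\mu(x,y)\bigr]_{y=x}$, exactly as in the smooth identity \eqref{variation of eigenvalues mult}.

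The main obstacle is the local analysis at the conical points: for the angle variation $t=\beta_i$ the metrics $m_t$ are not quasi-isometric near $z_i$ (the cone itself changes), so one cannot simply transport to a fixed domain and must instead make the above gluing construction genuinely work, controlling $R_{\mu,t}$, its $x$- and $t$-derivatives, and the series \eqref{eigenfunc expansion near vert} uniformly up to $\partial\mathbb K_j(\epsilon)$, so that the $\epsilon\to0$ and $t\to0$ limits in the integration by parts are legitimate. Everything else is standard elliptic regularity or a repetition of computations already present in the excerpt.
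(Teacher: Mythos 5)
Your treatment of part 1 is essentially the paper's: the Riesz--Dunford contour representation of $\mathcal{F}_t(\cdot,\cdot|\mathcal{E})$ through the resolvent, combined with the \cite{MNP}-type reduction to a fixed domain $X(\epsilon)$ with Dirichlet-to-Neumann conditions on $\partial\mathbb{K}_j(\epsilon)$ and a perturbation series in $t$, is exactly how statement (1) is proved there, so apart from the sketchiness of ``tracking the $t$-dependence through the gluing'' this half is fine. Part 2, however, has two genuine gaps. First, you derive the derivative formula on ``$t$-subintervals on which the cluster eigenvalues happen to be simple'' and then let $t\to 0$ through such subintervals; such subintervals need not exist, since an eigenvalue may stay multiple on a whole interval (this is precisely the case $\lambda_k(t)=\dots=\lambda_{k+m-1}(t)$ for all $t$ treated in the paper). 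In that situation the paper runs the integration-by-parts argument of (\ref{variation of eigenvalues conic}) not on individual eigenfunctions but on the cluster kernel $u=\mathcal{F}_t(\cdot,\cdot|1)$ itself, for which the expansion (\ref{eigenfunc expansion near vert}) and the estimates (\ref{dot u near vertex est}) of Lemma \ref{SPD lemma} still apply; your argument needs this (or an analogous statement on strata of constant multiplicity), otherwise the limiting procedure is vacuous.

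Second, you claim that the differentiability of $\sum_j\kappa_{k+j}=\int_X\mathcal{F}_t(x,x|{\rm log})\,dS$ at all $t$ (including multiplicity-change points), and the harmlessness of $\mathscr{I}$, follow from part 1 because $\mathcal{F}_t(x,x|1)$ is continuous in $t$. But part 1 gives smoothness only \emph{off the vertices}, while both this integral and the principal-value integral against $\dot{\phi}$ run over vertex neighborhoods, where uniform-in-$t$ control of $\partial_t\mathcal{F}_t$ is exactly the unresolved point -- you yourself flag it as ``the main obstacle'' without supplying the idea that closes it. The paper closes it without any vertex analysis: it pairs $\mathcal{F}_t(\cdot,\cdot|\mathcal{E})$ with a fixed basis $v_k,\dots,v_{k+m-1}$ orthonormal in $L_2(X(\epsilon);m_0)$, obtaining the matrix $M^*\mathcal{E}(D)M$ with $M_{kl}(t)=(u_{k+\cdot},v_{k+\cdot})_{L_2(X(\epsilon);m_0)}$ and $D={\rm diag}(\lambda_k,\dots,\lambda_{k+m-1})$, and then uses the polar decomposition $M=UA$, $A=\sqrt{M^*M}$ (smooth and invertible by the case $\mathcal{E}=1$), to exhibit $\sum_j\mathcal{E}(\lambda_{k+j}(t))={\rm Tr}\,\mathcal{E}(D(t))=A^{-1}(M^*\mathcal{E}(D)M)A^{-1}$-trace as a smooth function of $t$ built from data on $X(\epsilon)$ only. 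This trace/polar-decomposition step is the missing ingredient in your proposal: it yields the differentiability of the symmetric functions of the cluster eigenvalues at the exceptional parameters and justifies keeping only the removable-discontinuity operator $\mathscr{I}$ on the right-hand side of (\ref{variation of eigenvalues conic multip}).
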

To make the exposition self-contained, the proof of Lemma \ref{SPD lemma mult} is presented in Apppendix \ref{SPD theory}.

\subsubsection*{\bf Variation of ${\rm log\,det}\Delta$: preliminary formulas.} Repeating formally the arguments of Section \ref{sec smooth Polyakov} (involving Lemma \ref{magiclemma}), one obtains
\begin{equation}
\label{det formula}
\partial_t{\rm log\,det}\Delta=\Psi_\infty \qquad (t=z_i,\beta_i),
\end{equation}
where $\Psi_\infty$ is the constant term in the asymptotics, as $\Re\mu\to -\infty$, of the integral
\begin{equation}
\label{psi int}
\Psi_\mu:={\rm p.v.}\int_X\dot{\phi}\psi_\mu dS.
\end{equation}
Here $\dot{\phi}$ and $\psi_\mu$ are given by (\ref{vert shift}), (\ref{vert opening}) and (\ref{psi mu}), respectively. 

For metrics with conical singularities, the justification of (\ref{det formula}) is much more complicated since the estimates of the regularized integral on the right-hand side of (\ref{variation of eigenvalues conic}) requires not only the $L_2(X,m)$-boundedness of eigenfunctions (which is nothing more than the normalization conditions) but also their asymptotics near the vertices with estimates of the coefficients and the remainder uniform in $k$. Since asymptotics (\ref{Psi j variation z asymp fin}), (\ref{Psi i variation z asymp fin}) required for this are also needed for calculation of the right-hand side of (\ref{det formula}), we hold over the justification to Section \ref{Justifisection}.

Let us represent $\psi_\mu$ as $\psi_\mu=\psi_\mu^{(0)}+\psi_\mu^{(1)}$, where
\begin{align}
\label{modified psi mu}
\psi_\mu^{(0)}(x)&:=\mu \Big[\frac{1}{2\pi}K_0(d(x,y)\sqrt{-\mu}))-R_{\mu}(x,y)\Big]_{y=x}, \\ \nonumber
\psi_\mu^{(1)}(x)&:=-\mu \Big[\frac{K_0(d(x,y)\sqrt{-\mu})+{\rm log}d(x,y)}{2\pi})+\frac{1}{A\mu}\Big]_{y=x}=\\ \nonumber
&\qquad\qquad\qquad\qquad\qquad=\frac{\mu\,{\rm log}(4|\mu|)}{4\pi}+\frac{\mu\gamma}{2\pi}-\frac{1}{A},
\end{align}
and $K_0$ is the Macdonald function. Now, formula (\ref{psi int}) reads
\begin{equation}
\label{psi int 1}
\Psi_\mu:=-\frac{\dot{A}\mu\,{\rm log}(4|\mu|)}{4\pi}-\frac{\dot{A}\mu\gamma}{2\pi}+\frac{\dot{A}}{A}+{\rm p.v.}\int_X\dot{\phi}\psi_\mu^{(0)}(x) dS.
\end{equation}
For polyhedral metrics (\ref{polyhedral metric}), formula (\ref{resolvent kernal asymp}) remains valid (with $K=0$) and uniform in $x,y$ outside small neighborhoods of vertices and can be specified as follows
$$R_\mu(x,y)-\frac{1}{2\pi}K_0(d(x,y)\sqrt{-\mu}))=O(e^{\epsilon_0\Re\mu}),$$
where $\epsilon_0>0$. Therefore, equality (\ref{psi int 1}) remains valid, up to the terms exponentially decaying as $\Re\mu\to-\infty$, if one replaces the domain of integration in the right-hand side of (\ref{psi int 1}) with an arbitrarily small neighborhood of vertices. Now combining formulas (\ref{det formula}) and (\ref{psi int 1}) yields
\begin{equation}
\label{det formula 1}
\partial_t{\rm log}\,({\rm det}\Delta/A)=\sum_{j=1}^{M}\Psi^{(j)}_\infty, 
\end{equation}
where $\Psi^{(j)}_\infty$ is the constant term in the asymptotics, as $\Re\mu\to -\infty$, of the integral
\begin{equation}
\label{psi vertex int}
\Psi^{(j)}_\mu:={\rm p.v.}\int_{\mathbb{K}_j(\epsilon)}\dot{\phi}\psi_\mu^{(0)} dS
\end{equation}
over the small neighborhood of the vertex $z_j$. To justify (\ref{det formula 1}), one needs to prove that each $\Psi^{(j)}_\mu$ admits asymptotics (\ref{F asymp}). To this end (and also to calculate the terms $\Psi^{(j)}_\infty$), one requires the asymptotics of the resolvent kernel $R_\mu(x,y)$ as $\Re\mu\to-\infty$ which is uniform near vertices.

\subsubsection*{\bf Parametrix for the resolvent kernel $R_\mu(x,y)$ in $\mathbb{K}_j(\epsilon)$.} For the infinite cone $\mathbb{K}$ of opening $\beta$, the heat kernel is given by (see \cite{Carslaw,Dowker})
\begin{equation}
\label{HK Cone}
H_t(r,\varphi,r',\varphi'|\beta)=\frac{1}{8\pi i\beta t}\int\limits_{\mathscr{C}}{\rm exp}\Big(-\frac{\mathfrak{r}^2}{4t}\Big)\,\Xi d\vartheta,
\end{equation}
where
\begin{equation}
\label{rtheta}
\mathfrak{r}^2:=r^2-2rr'{\rm cos}\vartheta+r'^{2}, \qquad \Xi:={\rm cot}\Big(\pi\beta^{-1}(\vartheta+\varphi-\varphi')\Big),
\end{equation}
$(r,\varphi)$ and $r',\varphi')$ are polar coordinates of the points $z$ and $z'$ of $\mathbb{K}$, respectively. The integration contour $\mathscr{C}$ is the union of the lines $\pm l:=\{\vartheta=\pm(\pi-i\acute{\vartheta})\}_{\acute{\vartheta}\in\mathbb{R}}$ and infinitesimal anti-clockwise circles $\odot[\vartheta_*]$ centered at the roots $\vartheta_*$ of $\Xi$ lying in the strip $\Re\theta\in(-\pi,\pi)$.   

Separating the contribution of the pole at $\vartheta=\varphi'-\varphi$ and assuming that $x$ and $x'$ are close enough, one rewrites (\ref{HK Cone}) as 
\begin{equation}
\label{HK Cone 1}
H_t(r,\varphi,r',\varphi'|\beta)=\frac{1}{4\pi t}{\rm exp}\Big(-\frac{d(z,z')^2}{4t}\Big)+\frac{1}{8\pi i\beta t}\int\limits_{\tilde{\mathscr{C}}}{\rm exp}\Big(-\frac{\mathfrak{r}^2}{4t}\Big)\,\Xi d\vartheta,
\end{equation}
where $\tilde{\mathscr{C}}:=\mathscr{C}\backslash\odot[\varphi'-\varphi]$, the first term is just a heat kernel on the plane.

The resolvent kernel in $\mathbb{K}$ (corresponding to the non-negative Laplacian) is obtained by the Laplace transform of (\ref{HK Cone 1}),
\begin{align}
\label{RK param}
\begin{split}
R_\mu(r,\varphi,r',\varphi'|\beta)=\int\limits_{0}^{+\infty}&e^{\mu t}H^0_t(r,\varphi,r',\varphi'|\beta)dt=\\
=\frac{1}{2\pi}K_0(d(z,z')&\sqrt{-\mu})+\frac{1}{8\pi i\beta}\int\limits_{\tilde{\mathscr{C}}}d\vartheta\,\Xi \int\limits_{0}^{+\infty}{\rm exp}\Big(\mu t-\frac{\mathfrak{r}^2}{4t}\Big)\frac{dt}{t}.
\end{split}
\end{align}
Note that the right-hand side and all its derivatives decay faster than any power of $|\mu|$ as $\Re\mu\to -\infty$ uniformly in $z$ and $z'$ separated from each other.

Now, let $z\in X$ and $z'\in\mathbb{K}_j(\epsilon)$. Denote by $\chi_j$ the cut-off function equal to $1$ in a neighborhood of $\mathbb{K}_j(\epsilon)$ and introduce the parametrix 
$$\chi_j(z)R_\mu(r_j,\varphi_j,r'_j,\varphi'_j|\beta_j)$$ for the resolvent kernel $R_\mu(z,z')$ on $(X,g)$. Denote 
$$\tilde{R}_\mu(\cdot,z')=R_\mu(\cdot,z')-\chi_j R_\mu(\cdot,r'_j,\varphi'_j|\beta_j),$$
then $(\Delta-\mu)\tilde{R}_\mu(\cdot,z')=-[\Delta,\chi_j]R_\mu(\cdot,r'_j,\varphi'_j|\beta_j)$. Here the right-hand side and all its derivatives are $O(|\mu|^{-\infty})$ as $\Re\mu\to -\infty$ since $z\in {\rm supp}[\Delta,\chi_j]$ and $z'\in\mathbb{K}_j(\epsilon)$ are always separated from each other. In view of the standard operator estimate 
$$(\Delta-\mu)^{-1}=O(|\mu|^{-1}), \qquad \Re\mu\to -\infty,$$ 
the $L_2(X;g)$-norm of $\tilde{R}_\mu(\cdot,z')$ (therefore, the $L_2(X;g)$-norm of any $\Delta^l\tilde{R}_\mu(\cdot,z')$, $l=1,2,\dots$) is $O(|\mu|^{-\infty})$. In view of the smoothness increasing theorems for solutions to elliptic equations, this means that 
\begin{equation}
\label{est rem RK}
\tilde{R}_\mu(z,z')=O(|\mu|^{-\infty}), \quad \partial_z\tilde{R}_\mu(z,z')=O(|\mu|^{-\infty})
\end{equation}
uniformly in $z\in X(\epsilon)$ and $z'\in\mathbb{K}_j(\epsilon)$. To prove (\ref{est rem RK}) for $z$ close to vertices, it remains to note that $(\Delta-\mu)\tilde{R}_\mu(\cdot,z')=0$ in $\mathbb{K}_j(\epsilon)$ and, thus, representation (\ref{eigenfunc expansion near vert}) is valid for $u=\tilde{R}_\mu(z,z')$ and $\lambda=\mu$. Now the substitution of (\ref{est rem RK}) into (\ref{eigenfunc expansion near vert}) yields (\ref{est rem RK}) for any $z\in K_j(\epsilon)$. 

In view of (\ref{modified psi mu}), (\ref{RK param}), and (\ref{est rem RK}), we obtain the expansion
\begin{equation}
\label{modified psi mu asymp}
\psi_\mu^{(0)}=a_\mu(r_j)+\tilde{\psi}_\mu^{(0)}
\end{equation}
in $\mathbb{K}_j(\epsilon)$, where
\begin{align}
\label{modified psi mu main term}
a_\mu(r_j):=\frac{-\mu}{8\pi i\beta_j}\int\limits_{\tilde{\mathscr{C}}}d\vartheta\,{\rm cot}\Big(\frac{\pi\vartheta}{\beta_j}\Big) \int\limits_{0}^{+\infty}{\rm exp}\Big(\mu t-\frac{r_j^2{\rm sin}^2(\vartheta/2)}{t}\Big)\frac{dt}{t},
\end{align}
and the remainder $\tilde{\psi}_\mu^{(0)}$ obey the (uniform in $z\in\mathbb{K}_j(\epsilon)$) estimate
\begin{equation}
\label{modified psi mu remainder est 1}
\tilde{\psi}_\mu^{(0)}(z)=O(|\mu|^{-\infty}), \qquad \partial_z\tilde{\psi}_\mu^{(0)}(z)=O(|\mu|^{-\infty}).
\end{equation}
In view of (\ref{modified psi mu remainder est 1}), the function $\tilde{\psi}_\mu^{(0)}(z)=\tilde{\psi}_\mu^{(0)}(z_j)+\int_{z_j}^z\partial_{z'}\tilde{\psi}_\mu^{(0)}(z')dz'$ obeys
\begin{equation}
\label{modified psi mu remainder est 2}
\tilde{\psi}_\mu^{(0)}(z)=\tilde{\psi}_\mu^{(0)}(0)+O(|\mu|^{-\infty}(z-z_j)), \qquad z\in\mathbb{K}_j(\epsilon).
\end{equation}
Note that main term (\ref{modified psi mu main term}) is rotationally symmetric, i.e., is independent of the polar angle $\varphi_j$.

\subsection{Derivation of $\partial{\rm log}\,({\rm det}\Delta/A)/\partial z_i$.} Let $t=z_i$ and $j\ne i$. Since $(z-z_i)^{-1}$ is holomorphic near $z=z_j$, it admits the expansion
\begin{equation}
\label{Taylor i ne j}
(z-z_i)^{-1}=(z_j-z_i)^{-1}+\sum_{k=1}^{\infty}c_{jk}\xi_{j}^{\frac{2\pi k}{\beta_j}}.
\end{equation}
Substituting (\ref{vert shift}), (\ref{modified psi mu asymp}) into (\ref{psi vertex int}) and applying formulas (\ref{modified psi mu remainder est 1}), (\ref{Taylor i ne j}), one obtains
\begin{equation}
\label{Psi j variation z asymp}
\Psi^{(j)}_\mu=\frac{b_i}{z_j-z_i}\int\limits_{\mathbb{K}_j(\epsilon)}a_\mu dS+b_i\int\limits_{0}^\epsilon a_\mu r_j\,dr_j\int\limits_0^{\beta_j}d\varphi_j\,\sum_{k=1}^{\infty}c_{jk}\xi_{j}^{\frac{2\pi k}{\beta_j}}+O(|\mu|^{-\infty}).
\end{equation}
Since $a_\mu$ is rotationally symmetric while each integral $\int_0^{\beta_j}\xi_j^{\frac{2\pi k}{\beta_j}}d\varphi_j$ is zero for any nonzero $k$, the second term in the right-hand side of (\ref{Psi j variation z asymp}) is zero. 

Let us derive the asymptotics of the integral $\int\limits_{\mathbb{K}_j(\epsilon)}a_\mu dS$ as $\Re\mu\to-\infty$. In view of (\ref{modified psi mu main term}), we have
\begin{align*}
\int\limits_{\mathbb{K}_j(\epsilon)}a_\mu dS=\frac{-\mu}{8\pi i}\int\limits_{0}^{+\infty}\frac{e^{\mu t}dt}{t}\int\limits_{\tilde{\mathscr{C}}}d\vartheta\,{\rm cot}\Big(\frac{\pi\vartheta}{\beta_j}\Big)\int\limits_{0}^\epsilon{\rm exp}\Big(-\frac{r_j^2{\rm sin}^2(\vartheta/2)}{t}\Big) r_j\,dr_j.
\end{align*}
Introducing the new variables
\begin{equation}
\label{new variable}
p:=\frac{r_j^2{\rm sin}^2(\vartheta/2)}{t}, \qquad P:=\frac{\epsilon^2{\rm sin}^2(\vartheta/2)}{t}
\end{equation}
one rewrites the last formula as
\begin{align*}
\int\limits_{\mathbb{K}_j(\epsilon)}a_\mu dS=\frac{-\mu}{16\pi i}\int\limits_{0}^{+\infty}e^{\mu t}dt\int\limits_{\tilde{\mathscr{C}}}d\vartheta\frac{{\rm cot}(\pi\vartheta/\beta_j)}{{\rm sin}^2(\vartheta/2)}\int\limits_{0}^P e^{-p}dp=\\
=\frac{-\mu}{16\pi i}\int\limits_{0}^{+\infty}e^{\mu t}dt\int\limits_{\tilde{\mathscr{C}}}d\vartheta\frac{{\rm cot}(\pi\vartheta/\beta_j)}{{\rm sin}^2(\vartheta/2)}(1-e^{-P}).
\end{align*}
Note that $\Re{\rm sin}(\vartheta/2)>{\rm const}>0$ on $\tilde{\mathscr{C}}$ and grows exponentially as $\tilde{\mathscr{C}}\ni\vartheta\to\infty$. Then $e^{-P}$ and all its derivatives decay exponentially and uniformly in $\vartheta\in\tilde{\mathscr{C}}$ as $t\to +0$. Thus, the function
$$t\mapsto\int\limits_{\tilde{\mathscr{C}}}d\vartheta\frac{{\rm cot}(\pi\vartheta/\beta_j)}{{\rm sin}^2(\vartheta/2)}e^{-P}$$
can be smoothly extended by zero to the semi-axis $t<0$. Then the multiple integration by parts yields
$$\int\limits_{0}^{+\infty}e^{\mu t}dt\int\limits_{\tilde{\mathscr{C}}}d\vartheta\frac{{\rm cot}(\pi\vartheta/\beta_j)}{{\rm sin}^2(\vartheta/2)}e^{-P}=O(|\mu|^{-\infty}).$$
Therefore, one arrives at
\begin{align}
\label{integrated asymptotic const term}
\int\limits_{\mathbb{K}_j(\epsilon)}a_\mu dS=\mathfrak{Q}(\beta_j)+O(|\mu|^{-\infty}) \qquad (j=1,\dots,M).
\end{align}
where
\begin{equation}
\label{Q integral}
\mathfrak{Q}(\beta):=\frac{1}{16\pi i}\int\limits_{\tilde{\mathscr{C}}}\frac{{\rm cot}(\pi\vartheta/\beta)}{{\rm sin}^2(\vartheta/2)}d\vartheta=-\frac{1}{12}\Big(\frac{\beta}{2\pi}-\frac{2\pi}{\beta}\Big).
\end{equation}
In view of (\ref{integrated asymptotic const term}), asymptotics (\ref{Psi j variation z asymp}) takes the form
\begin{equation}
\label{Psi j variation z asymp fin}
\Psi^{(j)}_\mu=-\frac{b_i}{z_j-z_i}\Big(\frac{\beta_j}{2\pi}-\frac{2\pi}{\beta_j}\Big)\frac{1}{12}+O(|\mu|^{-\infty}) \qquad (j\ne i).
\end{equation}

Now, let $j=i$. Then (\ref{conical coordinates}) implies
\begin{align}
\label{Taylor i}
\begin{split}
\xi_{i}=\sqrt{C}\prod_{k\ne i}(z_i-z_k)^{b_k}\int\limits_{0}^{z-z_i}\big[1+\sum_{k\ne i}\frac{b_k\zeta}{z_i-z_k}+O(\zeta^2)\big]\zeta^{b_i}d\zeta=\\
=B_i(z-z_i)^{b_i+1}\big[1+\frac{\beta_iA_i}{2\pi}(z-z_i)+O((z-z_i)^2)\big],
\end{split}
\end{align}
where
\begin{equation}
\label{Taylir coefficients i}
A_i:=\sum_{k\ne i}\frac{b_k}{z_i-z_k}\frac{1}{b_i+2}, \qquad B_i:=\frac{2\pi\sqrt{C}}{\beta_i}\prod_{k\ne i}(z_i-z_k)^{b_k}.
\end{equation}
Formula (\ref{Taylor i}) implies
\begin{equation}
\label{zeta xi}
(z-z_i)^{-1}=\Big(\frac{\xi_{i}}{B_i}\Big)^{-\frac{2\pi}{\beta_i}}+A_i+\sum_{k=1}^{\infty}c_{ik}\xi_{i}^{\frac{2\pi}{\beta_i k}}.
\end{equation}
Now, the substitution of (\ref{vert shift}), (\ref{modified psi mu asymp}) into (\ref{psi vertex int}) and taking into account (\ref{zeta xi}), (\ref{modified psi mu main term}), and (\ref{modified psi mu remainder est 2}) yields
\begin{align*}
\Psi^{(j)}_\mu:={\rm p.v.}\int_{\mathbb{K}_j(\epsilon)}\frac{b_i}{z-z_i}(a_\mu+\tilde{\psi}_\mu^{(0)}(0)+O(|\mu|^{-\infty}(z-z_i)) dS=\\
=\cdot\,{\rm p.v.}\int_{\mathbb{K}_j(\epsilon)}b_i(z-z_i)^{-1}a_\mu dS+0+O(|\mu|^{-\infty})=b_i A_i\int_{\mathbb{K}_j(\epsilon)}a_\mu dS+\\
+{\rm p.v.}\int_0^{\epsilon}dr_i\,r_i b_i a_\mu\int_{0}^{\beta_i}\Big(\Big(\frac{\xi_{i}}{B_i}\Big)^{-\frac{2\pi}{\beta_i}}+\sum_{k=1}^{\infty}c_{ik}\xi_{i}^{\frac{2\pi}{\beta_i k}}\Big)d\varphi_i+O(|\mu|^{-\infty}).
\end{align*}
Here the last integral in the right-hand side vanishes since $a_\mu$ is rotationally symmetric. Now, taking into account (\ref{integrated asymptotic const term}) and (\ref{Taylir coefficients i}), one arrives at
\begin{equation}
\label{Psi i variation z asymp fin}
\Psi^{(i)}_\mu=\Big(\sum_{k\ne i}\frac{b_k}{z_k-z_i}\Big)\frac{b_i}{b_i+2}\Big(\frac{\beta_i}{2\pi}-\frac{2\pi}{\beta_i}\Big)\frac{1}{12}+O(|\mu|^{-\infty}).
\end{equation}
Substituting the constant terms in asymptotics (\ref{Psi j variation z asymp fin}), (\ref{Psi i variation z asymp fin}) into (\ref{det formula 1}) and taking into account that $\beta_j/2\pi=b_j+1$ (hence, $\beta_j/2\pi-2\pi/\beta_j=b_j(b_j+2)/b_j+1$), one obtains
\begin{align*}
12\frac{\partial {\rm log}\,({\rm det}\Delta/A)}{\partial z_i}=\sum_{j\ne i}\frac{b_i}{z_j-z_i}\Big[\frac{b_j(b_j+2)}{b_j+1}-\frac{b_j}{b_i+2}\frac{b_i(b_i+2)}{b_i+1}\Big]=\\
=\sum_{j\ne i}\frac{b_i b_j}{z_j-z_i}\Big[\frac{b_j+2}{b_j+1}-\frac{b_i}{b_i+1}\Big]=\sum_{j\ne i}\frac{b_i b_j}{z_i-z_j}\Big[\frac{2\pi}{\beta_j}+\frac{2\pi}{\beta_i}\Big].
\end{align*}
Thus, we have arrived to the Tankut Can formula
\begin{equation}
\label{Tankut Can}
\frac{\partial {\rm log}\,({\rm det}\Delta/A)}{\partial z_i}=\frac{\pi}{6}\sum_{j\ne i}\frac{b_i b_j}{z_i-z_j}\Big[\frac{1}{\beta_i}+\frac{1}{\beta_j}\Big]=\frac{\partial\mathfrak{W}}{\partial z_i},
\end{equation}
where the function $\mathfrak{W}$ is given by 
\begin{equation}
\label{DoubleYouFunc}
\mathfrak{W}(z_1,\dots,z_M,\beta_1,\dots,\beta_M):=\frac{\pi}{3}\sum_{k<l}b_k b_l\Big[\frac{1}{\beta_k}+\frac{1}{\beta_l}\Big]{\rm log}|z_k-z_l|.
\end{equation}

\subsection{Derivation of $\partial {\rm log}\,({\rm det}\Delta/A)/\partial\beta_i$} 
Let $t=\beta_i$ and $j\ne 1,i$. Since $\dot{\phi}$ (given by (\ref{vert opening})) is harmonic in $X\backslash\{z_1,z_i\}$, it admits the expansion
$$\dot{\phi}(z)-\dot{\phi}(z_j)=\sum_{k=1}^{\infty} c_k\xi_j^k+\sum_{k=1}^{\infty} d_k\overline{\xi}_j^k$$
near $z=z_j$. Since the integration of the right-hand side multiplied by any rotationally symmetric function (such as the parametrix $a_\mu$ for $\psi_\mu^{(0)}$ given by (\ref{modified psi mu main term})) over $\mathbb{K}_j$ gives zero, formulas (\ref{psi vertex int}), (\ref{modified psi mu asymp}), (\ref{modified psi mu remainder est 1}), and (\ref{integrated asymptotic const term}) imply
\begin{equation}
\label{Psi j variation beta asymp}
\begin{split}
\Psi^{(j)}_\mu=\dot{\phi}(z_j)\int_{\mathbb{K}_j(\epsilon)}a_\mu dS+O(|\mu|^{-\infty})=&\\
=-\frac{1}{12\pi}\Big(\frac{\beta_j}{2\pi}-\frac{2\pi}{\beta_j}\Big){\rm log}\Big|\frac{z_j-z_1}{z_j-z_i}&\Big|+O(|\mu|^{-\infty})  \qquad (j\ne 1,i).
\end{split}
\end{equation}

Now, let $j=i$. Then formulas (\ref{vert opening}), (\ref{zeta xi}) yield the expansion
\begin{align}
\label{perturbation beta near z i}
\dot{\phi}=-\frac{2{\rm log}|\xi_i|}{\beta_i}+\Big(\frac{{\rm log}|z_i-z_1|}{\pi}+\frac{2{\rm log}|B_i|}{\beta_i}\Big)+\sum_{k=1}^{\infty} \tilde{c}_k\xi_i^k+\sum_{k=1}^{\infty} \tilde{d}_k\overline{\xi}_i^k
\end{align}
near $z=z_i$, where $B_i$ is given by (\ref{Taylir coefficients i}). The substitution of the last formula and (\ref{modified psi mu asymp}) into (\ref{psi vertex int}) and taking into account (\ref{modified psi mu remainder est 1}) and (\ref{integrated asymptotic const term}) yields 
\begin{align}
\label{psi mu j beta asymp preliminary}
\begin{split}
\Psi^{(i)}_\mu=\Big(\frac{{\rm log}|z_i-z_1|}{\pi}+\frac{2{\rm log}|B_i|}{\beta_i}\Big)\int\limits_{\mathbb{K}_i(\epsilon)}a_\mu dS-\frac{2}{\beta_i}\int\limits_{\mathbb{K}_i(\epsilon)}a_\mu {\rm log}|\xi_i| dS+\\
+O(|\mu|^{-\infty})=-\frac{1}{12}\Big(\frac{\beta_i}{2\pi}-\frac{2\pi}{\beta_i}\Big)\Big(\frac{{\rm log}|z_i-z_1|}{\pi}+\frac{2{\rm log}|B_i|}{\beta_i}\Big)-\\
-2\int\limits_{0}^{\epsilon}a_\mu(r_i)r_i{\rm log}(r_i) dr_i+O(|\mu|^{-\infty}).
\end{split}
\end{align}
Let us derive the asymptotics of the last term in (\ref{psi mu j beta asymp preliminary}). In view of (\ref{modified psi mu main term}), one has
\begin{align}
\label{integrated asymptotics log preliminary}
\begin{split}
\int\limits_{0}^{\epsilon}a_\mu(r_i)r_i{\rm log}(r_i) dr_i=\int\limits_{0}^{+\infty}\frac{-\mu e^{\mu t}\,dt}{8\pi i\beta_i t}\int\limits_{\tilde{\mathscr{C}}}d\vartheta\,{\rm cot}\Big(\frac{\pi\vartheta}{\beta_i}\Big)\int\limits_{0}^{\epsilon}e^{-p}{\rm log}(r_i)\,r_i\,dr_i=\\
=\frac{-\mu}{32\pi i\beta_i}\int\limits_{0}^{+\infty}dt\,e^{\mu t}\int\limits_{\tilde{\mathscr{C}}}d\vartheta\,\frac{{\rm cot}(\pi\vartheta/\beta_i)}{{\rm sin}^2(\vartheta/2)}\mathfrak{P}(\vartheta,t,\epsilon).
\end{split}
\end{align}
where $p,P$ are given by (\ref{new variable}) and
$$\mathfrak{P}(\vartheta,t,\epsilon):=\int\limits_{0}^{P}e^{-p}\big[{\rm log}p+{\rm log}t-{\rm log}\big({\rm sin}^2(\vartheta/2)\big)\big]\,dp.$$
Since $\Re{\rm sin}^2(\vartheta/2)\ge c_0>0$ on $\tilde{\mathscr{C}}$ and it grows exponentially as $\tilde{\mathscr{C}}\ni\vartheta\to\infty$, we have 
\begin{align*}
\mathfrak{P}(\vartheta,t,\epsilon)=\int\limits_{0}^{+\infty}e^{-p}\big[{\rm log}p+{\rm log}t-{\rm log}\big({\rm sin}^2(\vartheta/2)\big)\big]\,dp+\tilde{\mathfrak{P}}(\vartheta,t,\epsilon)=\\
={\rm log}t-{\rm log}\big({\rm sin}^2(\vartheta/2)\big)+\int\limits_{0}^{+\infty}e^{-p}{\rm log}p\,dp+\tilde{\mathfrak{P}}(\vartheta,t,\epsilon),
\end{align*}
where $\tilde{\mathfrak{P}}(\vartheta,t,\epsilon)$ and all its derivatives decay exponentially and uniformly in $\vartheta\in\tilde{\mathscr{C}}$ as $t\to +0$. Note that the integral in the last formula is equal to $\Gamma'(1)=-\gamma$. Thus, (\ref{integrated asymptotics log preliminary}) can be rewritten as
\begin{align}
\label{a mu log asymp}
\begin{split}
-2&\int\limits_{0}^{\epsilon}a_\mu(r_i)r_i{\rm log}(r_i) dr_i=\frac{\mu\mathfrak{Q}(\beta_i)}{\beta_i}\int\limits_{0}^{+\infty}dt\,e^{\mu t}{\rm log}t-\\
-&\frac{\mathfrak{Q}(\beta_i)}{\beta_i}\int\limits_{0}^{+\infty}e^{-p}{\rm log}p\,dp+\frac{\tilde{\mathfrak{Q}}(\beta_i)}{\beta_i}+O(|\mu|^{-\infty})=\\
=&\frac{\mathfrak{Q}(\beta_i)\,{\rm log}(-\mu)}{\beta_i}-\frac{\gamma}{6\beta_i}\Big(\frac{\beta_i}{2\pi}-\frac{2\pi}{\beta_i}\Big)+\tilde{\mathfrak{Q}}'(\beta_i)+O(|\mu|^{-\infty}),
\end{split}
\end{align}
where $\mathfrak{Q}$ is given by (\ref{Q integral}) and
$$\tilde{\mathfrak{Q}}(\beta):=\frac{1}{16\pi i\beta}\int\limits_{\tilde{\mathscr{C}}}\frac{{\rm cot}(\pi\vartheta/\beta)}{{\rm sin}^2(\vartheta/2)}{\rm log}\big({\rm sin}^2(\vartheta/2)\big)d\vartheta.$$
Here the integration contour can be replaced by the union of the lines $\pm \tilde{l}=\{\vartheta=\pm(\varepsilon-i\acute{\vartheta})\}_{\acute{\vartheta}\in\mathbb{R}}$ with arbitrary sufficiently small $\varepsilon>0$. Since the integrand is odd, one has
\begin{align}
\label{Q tilde integral}
\begin{split}
\tilde{\mathfrak{Q}}'(\beta)=\lim_{\varepsilon\to +0}&\int\limits_{-\infty}^{+\infty}\frac{i}{16\pi\beta}\frac{{\rm coth}(\pi(\acute{\vartheta}+i\varepsilon)/\beta)}{{\rm sinh}^2((\acute{\vartheta}+i\varepsilon)/2)}{\rm log}\big(-{\rm sinh}^2((\acute{\vartheta}+i\varepsilon)/2)\big)d\acute{\vartheta}=\\
=&\frac{1}{16}\mathcal{H}\int\limits_{0}^{+\infty}\frac{{\rm coth}(\pi\breve{\vartheta})\,d\breve{\vartheta}}{{\rm sinh}^2(\beta\breve{\vartheta}/2)}+\frac{1}{48\pi}-\frac{{\rm log}(\beta/2)}{12\beta}\Big(\frac{\beta}{2\pi}-\frac{2\pi}{\beta}\Big),
\end{split}
\end{align}
where $\mathcal{H}$ denotes the Hadamard regularization of the diverging integral,
$$\mathcal{H}\int\limits_{0}^{+\infty}\frac{{\rm coth}(\pi\breve{\vartheta})\,d\breve{\vartheta}}{{\rm sinh}^2(\beta\breve{\vartheta}/2)}=\lim_{\epsilon\to+0}\Bigg[\int\limits_{\epsilon}^{+\infty}\frac{{\rm coth}(\pi\breve{\vartheta})\,d\breve{\vartheta}}{{\rm sinh}^2(\beta\breve{\vartheta}/2)}-\frac{4}{\pi\beta^2\epsilon^2}-\frac{4{\rm log}\epsilon}{3\beta}\Big(\frac{\beta}{2\pi}-\frac{2\pi}{\beta}\Big)\Bigg].$$
Hence, 
\begin{align*}
\tilde{\mathfrak{Q}}(\beta)=-\frac{1}{8}\mathcal{H}\int\limits_{0}^{+\infty}&{\rm coth}(\pi\breve{\vartheta})\,{\rm coth}(\beta\breve{\vartheta}/2)\frac{d\breve{\vartheta}}{\breve{\vartheta}}-\\
-&\frac{{\rm log}(\beta/2)}{12}\Big(\frac{\beta}{2\pi}+\frac{2\pi}{\beta}\Big)+\frac{1}{12}\Big(\frac{3\beta}{4\pi}-\frac{2\pi}{\beta}\Big)
\end{align*}

The substitution of (\ref{a mu log asymp}) into (\ref{psi mu j beta asymp preliminary}) yields
\begin{align}
\label{psi mu j beta asymp}
\begin{split}
\Psi^{(i)}_\mu&=O(|\mu|^{-\infty})+\frac{\mathfrak{Q}(\beta_i)\,{\rm log}(-\mu)}{\beta_i}+\tilde{\mathfrak{Q}}'(\beta_i)-\\
-&\frac{1}{12}\Big(\frac{\beta_i}{2\pi}-\frac{2\pi}{\beta_i}\Big)\Big(\frac{{\rm log}|z_i-z_1|}{\pi}+\frac{2}{\beta_i}\Big[{\rm log}|B_i|+\gamma\Big]\Big).
\end{split}
\end{align}
The repeating of the above reasoning shows that the asymptotics for $\Psi^{(1)}_\mu$ as $\Re\mu\to-\infty$ is obtained by the replacement of $\xi_i,\beta_i,B_i$ with $\xi_1,\beta_1,B_1$ and changing the overall sign in (\ref{psi mu j beta asymp}). 

Substituting expressions (\ref{Psi j variation beta asymp}), (\ref{psi mu j beta asymp}) into the right-hand side of (\ref{det formula 1}) and taking into account (\ref{Taylir coefficients i}), the equality $\beta_j/2\pi-b_j=1$, one arrives at
\begin{align}
\label{IsthereBarnes}
\begin{split}
\frac{\partial {\rm log}\,({\rm det}\Delta/A)}{\partial\beta_i}&=\mathfrak{B}_i-\mathfrak{B}_1, \\ 
\mathfrak{B}_q:=\frac{1}{6}\sum_{j\ne q}\Big(\frac{1}{\beta_j}&+\frac{2\pi}{\beta_q^2}\Big)b_j{\rm log}|{z_j-z_q}|+\tilde{\mathfrak{Q}}'(\beta_q)+\frac{\pi\gamma}{3\beta_q^2}+\\
&+\frac{1}{6\beta_q}\Big(\frac{2\pi}{\beta_q}-\frac{\beta_q}{2\pi}\Big){\rm log}\Big|\frac{2\pi\sqrt{C}}{\beta_q}\Big|=\frac{\partial(\mathfrak{W}+\mathfrak{F}(\beta,C))}
{\partial\beta_q}
\end{split}
\end{align}
where $\tilde{\mathfrak{Q}}'$ and $\mathfrak{W}$ are defined in (\ref{Q tilde integral}) and (\ref{DoubleYouFunc}), respectively, and $\mathfrak{F}$ is given by
\begin{align}
\label{OnlyAngleFunc}
\begin{split}
\mathfrak{F}(\beta,C)=&\Bigg[\frac{1}{8}\mathcal{H}\int\limits_{0}^{+\infty}{\rm coth}(\pi\breve{\vartheta})\,{\rm coth}(\delta\breve{\vartheta}/2)\frac{d\breve{\vartheta}}{\breve{\vartheta}}+\\
&+\frac{1}{12}\Big(\frac{\delta}{2\pi}+\frac{2\pi}{\delta}\Big){\rm log}(2\pi^2 C/\delta)+\frac{1}{12}\Big(\frac{\delta}{4\pi}-\frac{4\pi}{\delta}\Big)+\frac{\pi\gamma}{3\delta}\Bigg]\Bigg|^{\delta=2\pi}_{\delta=\beta}.
\end{split}
\end{align}

\subsection{Derivation of $\partial {\rm log}\,({\rm det}\Delta/A)/\partial C$} 
For variation (\ref{scaling}), formulas (\ref{psi vertex int}), (\ref{modified psi mu asymp}) and (\ref{integrated asymptotic const term}), (\ref{Q integral})
\begin{align*}
\Psi^{(j)}_\mu:=-\frac{1}{C}\int_{\mathbb{K}_j(\epsilon)}\psi_\mu^{(0)}dS=\frac{1}{12 C}\Big(\frac{\beta_j}{2\pi}-\frac{2\pi}{\beta_j}\Big)+O(|\mu|^{-\infty})=\\
=\frac{\partial\mathfrak{F}(\beta_j,C)}{\partial C}+\frac{b_j}{6C}+O(|\mu|^{-\infty}).
\end{align*}
The substitution of the last expression into (\ref{det formula 1}) and taking into account that $\sum_{j=1}^{M}b_j=-2$ yields
\begin{equation}
\label{scaling det}
\partial_C{\rm log}\,({\rm det}\Delta/A)=\partial_C\Big[\sum_{j=1}^{M}\mathfrak{F}(\beta_j,C)-\frac{{\rm log}C}{3}\Big].
\end{equation}

\

\begin{rem}
Integration of formulas {\rm(\ref{Tankut Can})}, {\rm(\ref{IsthereBarnes})}, {\rm(\ref{scaling det})} leads to the Aurell-Salomonson formula 
\begin{equation}
\label{AS formula}
{\rm det}\Delta=AC^{-1/3}{\rm exp}\Big(\mathfrak{W}(z_1,\dots,z_M,\beta_1,\dots,\beta_M)+\sum_{j=1}^M\mathfrak{F}(\beta_j,C)-c\Big).
\end{equation}
The functions $\mathfrak{W}$ and $\mathfrak{F}$ are given by {\rm(\ref{DoubleYouFunc})} and {\rm(\ref{OnlyAngleFunc})}, respectively. The constant $c$ is ``global'', i.e., it is independent on all the parameters $z_j$, $\beta_j$, $C$ and on the number of vertices $M$ in {\rm(\ref{polyhedral metric})}. Thus, $c$ can be found by comparison of the expressions for ${\rm det}\Delta/A$ for the tetrahedron with all angles $\pi$ provided by {\rm (\ref{AS formula})} and {\rm (\ref{tetr})}, which yields
\begin{align*}
c={\rm log}(2^{2/3}\pi)+4\mathfrak{F}(\pi,1)
\end{align*}
due to {\rm(\ref{Tankut Can})}. Thus, we have arrived to
\begin{equation}
\label{AS formula 1}
{\rm det}\Delta=\frac{Area(X,m)}{(4C)^{1/3}\pi}{\rm exp}\Big(\mathfrak{W}(z_1,\dots,z_M,\beta_1,\dots,\beta_M)+\sum_{j=1}^M\mathfrak{F}(\beta_j,C)-4\mathfrak{F}(\pi,1)\Big).
\end{equation}
\end{rem}

\section{Justification of formula (\ref{det formula})}
\label{Justifisection}
\subsection*{Estimates of the eigenfunctions near vertices.} Denote by $\mathbb{K}$ the infinite cone with opening angle $\beta=\beta_j$ and by $(r,\varphi)$ the polar coordinates on it. Introduce the weighted spaces $H^l_{\upsilon}(\mathbb{K})$ ($l=0,1,\dots, \, \upsilon\in\mathbb{R}$) with the norms
\begin{equation}
\label{weighted norms}
\|v\|^2_{H^l_{\upsilon}(\mathbb{K})}=\sum_{p+q\le l}\int_{\mathbb{K}}\Big|r^{\upsilon-l}\partial_\varphi^{q}(r\partial_r)^{p}v\Big|^2\,rdr\,d\varphi
\end{equation}
and the model Laplacian $\vartriangle:=r^{-2}((r\partial_r)^2+\partial_\varphi^2)$.
\begin{prop}[see Chapter 2, \cite{NP}]
\label{Plamenprop}
The continuous operator 
$$\vartriangle:\,H^{l+2}_{\upsilon}(\mathbb{K})\to H^{l}_{\upsilon}(\mathbb{K})$$ 
is an isomorphism unless $\upsilon-l-1$ is multiple of $2\pi/\beta$.
\end{prop}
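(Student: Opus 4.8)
The plan is to establish the Fredholm and isomorphism properties of the model operator $\vartriangle$ on the infinite cone $\mathbb{K}$ by separating variables in the angular coordinate. First I would decompose $L_2(\mathbb{R}/\beta\mathbb{Z})$ into the Fourier modes $e^{2\pi i n\varphi/\beta}$, $n\in\mathbb{Z}$, which are the eigenfunctions of $-\partial_\varphi^2$ with eigenvalues $(2\pi n/\beta)^2$. Under this decomposition the operator $\vartriangle$ becomes block-diagonal: on the $n$-th block it acts as the ordinary differential operator $r^{-2}\big((r\partial_r)^2-(2\pi n/\beta)^2\big)$ acting on functions of $r\in(0,\infty)$. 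Correspondingly the weighted space $H^l_\upsilon(\mathbb{K})$ restricts, mode by mode, to a one-dimensional weighted Sobolev space on the half-line with weight determined by $\upsilon-l$.

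Next I would carry out the standard Mellin-transform analysis of each block. Writing $r=e^\tau$ turns $r\partial_r$ into $\partial_\tau$ and the half-line into the real line; the Mellin transform in $r$ (equivalently, the Fourier transform in $\tau$) conjugates the $n$-th block of $\vartriangle$ to multiplication by the symbol $p^2\mapsto -(p^2+(2\pi n/\beta)^2)$ acting on the appropriate weighted $L_2$ space, where the weight $r^{\upsilon-l}$ corresponds to shifting the contour of the Mellin variable to the line $\Re s=l+1-\upsilon$ (the exact normalization following from the definition (\ref{weighted norms}), in which the power $r^{\upsilon-l}$ appears together with the factor $r\,dr$). The block operator is then an isomorphism between the corresponding weighted $L_2$ spaces precisely when this symbol has no zero on the relevant Mellin line, i.e.\ when the line $\Re s=l+1-\upsilon$ avoids the poles $s=\pm 2\pi n/\beta$ of the inverse symbol. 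The $l+2$ versus $l$ shift in Sobolev index is automatic since the symbol is a second-order polynomial and invertibility in the top-order weighted space upgrades to the full scale of norms in (\ref{weighted norms}).

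The remaining point is to assemble the mode-by-mode isomorphisms into a bounded two-sided inverse on the full spaces $H^{l+2}_\upsilon(\mathbb{K})\to H^l_\upsilon(\mathbb{K})$, with a norm bound uniform in $n$. This is where one must be slightly careful: one needs that $\inf_{p\in\mathbb{R}}|p^2+(2\pi n/\beta)^2|$, taken along the shifted Mellin line, is bounded below uniformly in $n$ once $\upsilon-l-1$ is a fixed real number not lying in $(2\pi/\beta)\mathbb{Z}$. Since the forbidden set is the discrete lattice $(2\pi/\beta)\mathbb{Z}$ and the quantity $\mathrm{dist}(\upsilon-l-1,\{\pm 2\pi n/\beta\})$ is bounded away from zero uniformly in $n$ for such $\upsilon$, this uniformity holds, and the direct sum of the block inverses is bounded. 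Thus $\vartriangle$ is an isomorphism exactly when $\upsilon-l-1\notin(2\pi/\beta)\mathbb{Z}$, which is the claim.

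I expect the main obstacle to be purely bookkeeping rather than conceptual: matching the precise weight exponent in the definition (\ref{weighted norms}) to the precise location of the Mellin contour, and verifying the uniform-in-$n$ lower bound on the symbol along that contour, so that the assembled inverse is genuinely bounded on the full weighted Sobolev spaces and not merely on each finite sum of modes. Since the statement is quoted from Chapter 2 of \cite{NP}, in the paper it suffices to record this as a reference; the sketch above indicates the proof for completeness.
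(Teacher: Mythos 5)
Your proposal is correct and is essentially the paper's own argument: the paper likewise performs a Mellin-type transform in $\sigma=\log r$ along the line $\Im\tau=\upsilon-l-1$ and inverts the cross-sectional operator $\partial_\varphi^2-\tau^2$ via the explicit kernel $\mathscr{R}_\tau$, whose poles at $\tau=2\pi i k/\beta$ give exactly the forbidden weights, and your Fourier decomposition in $\varphi$ is just the spectral form of that kernel (on the $n$-th mode it acts as multiplication by $-\big(\tau^2+(2\pi n/\beta)^2\big)^{-1}$). The only bookkeeping correction: the uniform-in-$n$ bound you need is not a lower bound on $p^2+(2\pi n/\beta)^2$ but rather $\big|(c+ip)^2-(2\pi n/\beta)^2\big|\ge\delta\big(1+p^2+(2\pi n/\beta)^2\big)$ with $c=l+1-\upsilon$ held off the lattice $(2\pi/\beta)\mathbb{Z}$, so that the two-order gain in the norms (\ref{weighted norms}) holds with a mode-independent constant; this follows from ${\rm dist}(c,(2\pi/\beta)\mathbb{Z})>0$ and is precisely what the paper's estimate (\ref{RK estimatesat infinity}) on $\mathscr{R}_\tau$ encodes.
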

\begin{proof} Let $v\in H^{l+2}_{\upsilon}(\mathbb{K})\to H^{l}_{\upsilon}(\mathbb{K})$ and $\vartriangle v=r^{-2}f$. Introduce the new variable $\sigma={\rm log}r$ and the complex Fourier transform $\hat{v}(\tau,\varphi):=\frac{1}{\sqrt{2\pi}}\int_{-\infty}^{+\infty}e^{-i\sigma\tau}v(e^{\sigma},\varphi)d\sigma$, where $\Im\tau=\tau_0:=\upsilon-l-1$. Then the equations $(\partial_\varphi^2-\tau^2)\hat{v}(\tau,\cdot)=\hat{f}$ hold for almost all $\Re\tau$ and
\begin{equation}
\label{parceval}
\int\limits_{\Im\tau=\tau_0}\sum_{p+q\le l}\tau^{2p}\|\partial_\varphi^{q}\hat{v}(\tau,\cdot)\|^2_{L_2(\mathbb{R}/\beta\mathbb{Z})}d\tau\asymp\|v\|^2_{H^{l+2}_{\upsilon}(\mathbb{K})}
\end{equation}
due to the Parseval identity. It is easily checked (by a straightforward substitution) that 
$$\mathscr{R}_\tau(\varphi,\varphi')=-\frac{{\rm cosh}\big(\tau(|\varphi-\varphi'|-\beta/2)\big)}{2\tau {\rm sinh}(\beta\tau/2)}, \qquad \varphi,\varphi'\in \mathbb{R}/\beta\mathbb{Z}$$
is the kernel of the operator $(\partial_\varphi^2-\tau^2)^{-1}$ (acting in $L_2(\mathbb{R}/\beta\mathbb{Z})$). It is holomorphic in $\tau$ outside $\tau=\frac{2\pi i k}{\beta}$ ($k\in\mathbb{Z}$) and it obeys the estimate 
\begin{equation}
\label{RK estimatesat infinity}
|\tau|^2\|\mathscr{R}_\tau(\cdot,\cdot)\|_{L_2((\mathbb{R}/\beta\mathbb{Z})^2)}+|\tau|\|\partial_\varphi\mathfrak{R}_\tau(\cdot,\cdot)\|_{L_2((\mathbb{R}/\beta\mathbb{Z})^2)}\le c(l,\nu)
\end{equation}
for large $|\Re\tau|$. Since the function $\hat{v}(\tau,\varphi):=\int_{\Im\tau=\tau_0}\mathscr{R}_\tau(\varphi,\varphi')\hat{f}(\varphi')d\tau$ satisfies $\partial^{2k}_\varphi\hat{v}=\tau^{2k}\partial_\varphi\hat{v}+\sum_{p\le k}\tau^{2(k-p)}\partial^{2p}_\varphi f$, estimates (\ref{RK estimatesat infinity}) and (\ref{parceval}) imply the inequality $\|v\|_{H^{l+2}_{\upsilon}(\mathbb{K})}\le c\|\vartriangle u\|_{H^{l}_{\upsilon}(\mathbb{K})}$.
\end{proof}
Let $\chi\in C_c^{\infty}(\mathbb{K})$ and $\chi=1$ near the vertex. Then asymptotics (\ref{eigenfunc expansion near vert}) for the eigenfunction $u=u_k$ (corresponding to the eigenvalue $\lambda=\lambda_k$) near $z_j$ can be rewritten as $\chi\tilde{u}:=\tilde{v}\in H^{2M}_{\upsilon}(\mathbb{K})$, where
\begin{align*}
\tilde{u}:=u\circ\mathscr{Z}^{(j)}_t-u(z_j)J_{0}(r\sqrt{\lambda}), \quad \upsilon-2M+1\in\Big(-\frac{2\pi}{\beta},0\Big).
\end{align*}
In what follows, we assume that $\upsilon>0$. In view of Proposition \ref{Plamenprop}, definition (\ref{weighted norms}) of the weighted norms and the equation $(\vartriangle-\lambda)\tilde{u}=0$, we have
\begin{align*}
\|\tilde{v}\|_{H^{0}_{\upsilon-2M}(\mathbb{K})}\le \|\tilde{v}\|_{H^{2M}_{\upsilon}(\mathbb{K})}\le c\|\vartriangle\tilde{v}\|_{H^{2(M-1)}_{\upsilon}(\mathbb{K})}\le\dots\le c^M\|\vartriangle^M\tilde{v}\|_{H^{0}_{\upsilon}(\mathbb{K})}\le \\ 
\le c^M\Big(\lambda^M\|\tilde{v}\|_{H^{0}_{\upsilon}(\mathbb{K})}+\|[\vartriangle^M,\chi]\tilde{u}\|_{L_2(\mathbb{K}(\epsilon))}\Big)\le C\lambda^M\|\tilde{u}\|_{L_2(\mathbb{K}(\epsilon))}.
\end{align*}
Here and in the subsequent, all estimates are uniform in $\lambda=\lambda_k$ and $k$. Let $\mathscr{D}$ be a domain containing $\partial\mathbb{K}(\epsilon)$ and the support of $[\vartriangle^M,\chi]$ and the closure of $\mathscr{D}$ does not contains the vertex. Due to the smoothness increasing theorem for the Laplace operator we have $\|\tilde{u}\|_{H^{2M}(\mathscr{D})}\le C(\mathscr{D})\|\vartriangle^M\tilde{u}\|_{L_2(\mathscr{D})}$, whence
\begin{align}
\label{remest 1}
\|\tilde{v}\|_{H^{0}_{\upsilon-2M}(\mathbb{K})}\le C\lambda^M\|\tilde{u}\|_{L_2(\mathbb{K}(\epsilon))}\le C\lambda^M(\|u\|_{L_2(X;m)}+|u(z_j)|)
\end{align} 
due to the uniform boundedness of $J_{0}(r\sqrt{\lambda})$ for $r,\lambda>0$. Put $V_0=\beta^{-1}{\rm log}(r/\epsilon)$, then integration by parts yields
\begin{align*}
\lambda(u\circ\mathscr{Z}^{(j)}_t,V_0)_{L_2(\mathbb{K}(\epsilon)\backslash \mathbb{K}(\epsilon'))}=(\vartriangle u\circ\mathscr{Z}^{(j)}_t,V_0)_{L_2(\mathbb{K}(\epsilon)\backslash \mathbb{K}(\epsilon'))}=\\
=\frac{1}{\epsilon\beta}(u\circ\mathscr{Z}^{(j)}_t,1)_{L_2(\partial\mathbb{K}(\epsilon)}-u(z_j)+o(1) \qquad (\epsilon'\to 0).
\end{align*}
Since $V_0$ is square integrable on $\mathbb{K}(\epsilon)$, the last formula, the Sobolev trace theorem, and the estimate $\|u\|_{H^1(\mathscr{D})}^2\asymp(\Delta u,u)_{L_2(X;m)}$ yield
\begin{equation}
\label{remest 2}
|u(z_j)|\le c(\|u\|_{L_2(X;m)}+\sqrt{(\Delta u,u)_{L_2(X;m)}})=O(\sqrt{\lambda}) \qquad (\lambda\to\infty).
\end{equation}
Since $\|u\|_{L_2(X;m)}=1$, estimates (\ref{remest 1}), (\ref{remest 2}) imply
\begin{equation}
\label{remest}
\|\tilde{v}\|_{H^{0}_{\upsilon-2M}(\mathbb{K})}=O(\lambda^{M+\frac{1}{2}}) \qquad (\lambda\to\infty).
\end{equation}
\subsection*{Estimates of regularized integrals for $\dot{\lambda}_k/\lambda_k$.} Recall that variation (\ref{vert shift}) is of the form $\dot{\phi}=c\xi_i^{-\frac{2\pi}{\beta_i}}+O(1)$ due to (\ref{conical coordinates}). Since the integration over $\varphi={\rm arg}\xi_i$ kills the terms $\xi_i^{-\frac{2\pi}{\beta_i}}J_{0}(r\sqrt{\lambda})^p$ ($p=1,2$), we have
\begin{equation}
\begin{split}
{\rm p.v.}\int\limits_{r_i\le\epsilon}\dot{\phi}u_k^2dS={\rm p.v.}\int\limits_{r_i\le\epsilon}\dot{\phi}[u_k^2-(u(z_j)J_{0}(r\sqrt{\lambda}))^2]dS=\\
=\int\limits_{r_i\le\epsilon}(\tilde{v}+2u(z_j)J_{0}(r\sqrt{\lambda}))\tilde{v}\,\big(\xi_i^{-\frac{2\pi}{\beta_i}}+O(1)\big)r_idr_id\varphi_i
\end{split}
\end{equation}
Then above estimates (\ref{remest}), (\ref{remest 2}) imply
$$\left|{\rm p.v.}\int\limits_{r_i\le\epsilon}\dot{\phi}u_k^2dS\right|\le c\|y\|_{H^0_{\upsilon-2M}(\mathbb{K})}(1+|u(z_j)|)+O(1)=O(\lambda^{M+1}) \qquad (\lambda\to\infty)$$
At the same time,
$$\left|\int\limits_{r_i\ge\epsilon}\dot{\phi}u_k^2dS\right|\le\max_{r_i\ge\epsilon}|\dot{\phi}|\,\|u_k\|^2_{L_2(X;m)}=O(1) \qquad (\lambda\to\infty).$$
Combining the last two inequalities, one arrives at
\begin{equation}
\label{var ind eig est}
\left|{\rm p.v.}\int\limits_{X}\dot{\phi}u_k^2dS\right|=O(\lambda^{M+1}) \qquad (\lambda\to\infty, \quad M=[\pi/\beta]+1).
\end{equation}
If the variation $\dot{\phi}$ is of form (\ref{vert opening}), then the same estimate is obtained in an even simpler way (and for smaller $M$) due to the weaker (logarithmic) singularity of $\dot{\phi}$. Note that all the above estimates are uniform in the parameter $t$.

\subsection*{Differentiability of $\zeta_{\Delta-\mu}(2+q)$ in $t$ for large positive $q$.}
Formula (\ref{variation of eigenvalues conic}) implies that
\begin{equation}
\label{zeta of big}
\begin{split}
\partial_t((\lambda_k-\mu)^{-(2+q)})=&\frac{-(2+q)\lambda_k}{(\lambda_k-\mu)^{3+q}}\,\mathscr{J}\,{\rm p.v.}\int\limits_X\dot{\phi}u_k^2dS=\\
&=\frac{-1}{(1+q)!}\,\mathscr{J}\,{\rm p.v.}\int\limits_X \partial_\mu^{2+q}\Big(\mu \frac{u_k^2}{\lambda_k-\mu}\Big)\dot{\phi}dS.
\end{split}
\end{equation}
In view of (\ref{zeta of big}) and (\ref{var ind eig est}) and the Weyl's law $\lambda_k\sim k$, we arrive to the (uniform in $t\in(-t_0,t_0)$ and $k$) estimate
$$|\partial_t((\lambda_k-\mu)^{-(2+q)})|=O(k^{[1/2(b+1)]-q}) \qquad (\lambda\to\infty),$$
where $b_{\rm min}=\min_j b_j$. Due to the last formula, for $q>[1/2(b+1)]+1$, the series $\sum_k\partial_t((\lambda_k-\mu)^{-(2+q)})$ converge uniformly in $t\in(-t_0,t_0)$ and thus the series $\zeta_{\Delta-\mu}(2+q):=\sum_k((\lambda_k-\mu)^{-(2+q)})$ admit term-wise differentiation in $t$. In particular, making summation over $k$ in (\ref{zeta of big}) yields
\begin{equation}
\label{zeta of big variation}
\dot{\zeta}_{\Delta-\mu}(2+q)=\frac{1}{(1+q)!}\,\mathscr{J}\,{\rm p.v.}\int\limits_X \dot{\phi}\partial_\mu^{2+q}\psi_\mu dS=\frac{\partial_\mu^{2+q}\Psi_\mu}{(1+q)!},
\end{equation}
for $q>[1/2(b+1)]+1$; here $\psi_\mu$ and $\Psi_\mu$ are defined in (\ref{psi mu}), (\ref{psi int}), respectively. Note that the operator $\mathscr{J}$ eliminating removable discontinuities can be omitted in (\ref{zeta of big variation}) since $\partial_\mu^{2+q}\Psi_\mu$ is continuous in $t$. Indeed, due to (\ref{psi int 1}), (\ref{psi vertex int}), it is sufficient to show that ${\rm p.v.}\int_{\mathbb{K}_j(\epsilon)}\dot{\phi}\partial_\mu^{2+q}\psi^{(0)}_{\mu}dS$ is continuous in $t$. To this end, one applies expansion (\ref{modified psi mu asymp}) for $\psi^{(0)}_{\mu}$, where first term (\ref{modified psi mu main term}) is rotationally symmetric (and, thus, should be killed by the integration with the singular part of $\dot{\phi}$) while the remainder is of the form $\tilde{\psi}^{(0)}_{\mu}(z)=\tilde{R}_{\mu}(z,z)$, where $\tilde{R}_{\mu}(\cdot,z)$ obeys asymptotics (\ref{eigenfunc expansion near vert}) near $z_j$. Thus, $\int_{\mathbb{K}_j(\epsilon)}\dot{\phi}\partial_\mu^{2+q}[\tilde{\psi}^{(0)}_{\mu}(z)-\tilde{\psi}^{(0)}_{\mu}(z_j)]dS$ is continuous in $t$.

Note that formulas (\ref{psi int 1}), (\ref{psi vertex int}), (\ref{Psi j variation z asymp fin}), (\ref{Psi i variation z asymp fin}), (\ref{psi mu j beta asymp}) lead to the (admitting differentiation in $\mu$) asymptotics 
\begin{equation}
\label{asymp of Psi integrated}
\Psi_\mu=C_{1}\mu{\rm log}(-\mu)+c_1\mu+C_0{\rm log}(-\mu)+c_0+O(\mu^{-\infty}) \qquad (\Re\mu\to-\infty),
\end{equation}
where the coefficients depend on the parameters of metric (\ref{polyhedral metric}) and its variations; in particular,
\begin{equation}
\label{asymp of Psi integrated 1}
C_1=-\dot{A}/4\pi, \qquad C_0=-\partial_t\big(\sum_{k}\mathfrak{Q}(\beta_k)\big)
\end{equation}
(here the equality $\partial_t(\sum_k\beta_k)=0$ is used).

\subsection*{Differentiability of $\zeta_{\Delta}(s)$ in the parameter for large positive $\Re s$.} In view of the residue theorem, we have
$$(s-1)\dots(s-1-q)\lambda_k^{-s}=\frac{(q+1)!}{2\pi i}\int_\Gamma\frac{\mu^{1+q-s}d\mu}{(\lambda_k-\mu)^{2+q}},$$
where $\Gamma$ is the contour enclosing the cut $(-\infty,0]$. Making summation over $k$ and taking into account the Weyl's law, one arrives at
\begin{equation}
\label{zeta of s via zeta of big}
(s-1)\dots(s-1-q)\zeta_{\Delta}(s)=\frac{(q+1)!}{2\pi i}\int\limits_\Gamma\zeta_{\Delta-\mu}(2+q)\mu^{1+q-s}d\mu
\end{equation}
for $\Re s>1$ (for $\Re s\le 1$, both sides of this formula should be understood as analytic continuations of them from the half-plane $\Re s>1$). For sufficiently large positive $q$ and $\Re s-q$, the right-hand side of (\ref{zeta of s via zeta of big}) admits the differentiation in $t$ due to (\ref{zeta of big variation}). Thus, for such $q,s$, one can write 
\begin{equation}
\label{zeta of s var via zeta of big}
(s-1)\dots(s-1-q)\dot{\zeta}_{\Delta}(s)=\frac{1}{2\pi i}\int\limits_\Gamma\mu^{1+q-s}\partial_\mu^{2+q}\Psi_\mu d\mu
\end{equation}
Integrating by parts in (\ref{zeta of s var via zeta of big}) and taking into account that $\partial_\mu^{2+l}\Psi_\mu=O(|\mu|^{-l})$ due to (\ref{asymp of Psi integrated}), one finally arrives at
\begin{equation}
\label{zeta of s var via zeta of big to 2}
(s-1)\dot{\zeta}_{\Delta}(s)=\frac{1}{2\pi i}\int\limits_\Gamma\mu^{1-s}\partial_\mu^{2}\Psi_\mu d\mu.
\end{equation}

\subsection*{Global differentiability of $\zeta_{\Delta}(s)$ in $t$ and the justification of (\ref{det formula}).} Now we should to prove that the same formulas are valid for all $s\in\mathbb{C}$, i.e. that one can interchange the analytic continuation of $\zeta_{\Delta}(s)$ and its differentiation in $t$. 

To this end, recall that the zeta function of $\Delta-\mu$ is related to its heat trace $K(\tau|\Delta):=\sum_k e^{-\lambda_k \tau}=\int_X H_\tau(x,x)dS$ (where $H_\tau$ is the heat kernel of $\Delta$) via
\begin{equation}
\label{zetazeta}
\zeta_{\Delta-\mu}(s)=\frac{1}{\Gamma(s)}\int\limits_{0}^{+\infty}e^{\mu\tau}\tau^{s-1}K(\tau|\Delta)d\tau.
\end{equation}
Using model heat kernel (\ref{HK Cone}) in the cone as a parametrix for $H_\tau$, one deduces the asymptotics
\begin{equation}
\label{heat kernal asyympp}
K(\tau|\Delta_{D})=\frac{A}{4\pi \tau}-\sum_{k}\mathfrak{Q}(\beta_k)+O(e^{-\mathfrak{b}/\tau}), \qquad \tau\to +0,
\end{equation}
where $\mathfrak{b}>0$ (see Theorem 7, \cite{KokKor}). Then there holds the aymptotics
\begin{equation}
\label{excluding singular part 0}
\zeta_{\Delta-\mu}(s)-\check{\zeta}_\mu(s)=\frac{1}{\Gamma(s)}\int\limits_{0}^{+\infty}\tau^{s-1}e^{\mu\tau}\Big(K-\frac{A}{4\pi \tau}+\sum_{k}\mathfrak{Q}(\beta_k)\Big)d\tau=O(\mu^{-\infty})
\end{equation}
as $\mu\to-\infty$, where the right-hand side is well-defined for all $s\in\mathbb{C}$ while
\begin{equation}
\label{excluding singular part 1}
\check{\zeta}_\mu(s):=\frac{A\Gamma(s-1)}{4\pi\Gamma(s)(-\mu)^{s-1}}-\frac{\sum_{k}\mathfrak{Q}(\beta_k)}{(-\mu)^{s}}.
\end{equation}
Comparison of asymptotics (\ref{zeta of big variation}), (\ref{asymp of Psi integrated}), (\ref{asymp of Psi integrated 1}) with (\ref{excluding singular part 0}), (\ref{excluding singular part 1}) yields
\begin{equation}
\label{comparison asymptotics just}
\partial_t[\zeta_{\Delta-\mu}(2+q)-\check{\zeta}_\mu(2+q)]=\Big[\frac{\partial_\mu^{2+q}\Psi_\mu}{(1+q)!}-\partial_t\check{\zeta}_\mu(2+q)\Big]=O(\mu^{-\infty})
\end{equation}
as $\mu\to-\infty$. Thus, the integrals 
\begin{align*}
\frac{(q+1)!}{2\pi i}\int\limits_{\Gamma}\mu^{1+q-s}&\Big[\zeta_{\Delta_D-\mu}(2+q)-\check{\zeta}_\mu(2+q)\Big]d\mu=\\
&=(s-1)\dots(s-1-q)\Big[\zeta_{\Delta-\mu}(s)-\check{\zeta}_\mu(s)\Big],\\
\frac{(q+1)!}{2\pi i}\int\limits_{\Gamma}\mu^{1+q-s}&\frac{d}{dt}\Big[\zeta_{\Delta_D-\mu}(2+q)-\check{\zeta}_\mu(2+q)\Big]d\mu
\end{align*}
converge for all $s\in\mathbb{C}$ uniformly in the parameter $t$. Thus, the usual Leibniz integration rule yields
\begin{align*}
(s-1)\dots&(s-1-q)\frac{d}{dt}\Big[\zeta_{\Delta-\mu}(s)-\check{\zeta}_\mu(s)\Big]=\\
=&\frac{(q+1)!}{2\pi i}\int\limits_{\Gamma}\mu^{1+q-s}\frac{d}{dt}\Big[\zeta_{\Delta_D-\mu}(2+q)-\check{\zeta}_\mu(2+q)\Big]d\mu,
\end{align*}
whence formulas (\ref{zeta of s var via zeta of big}) and (\ref{zeta of s var via zeta of big to 2}) follow. Finally, the application of Lemma \ref{magiclemma} yields formula (\ref{det formula}). (Note that the above regularization trick justifies also the change of order of the analytic continuation and the differentiation in the parameter $t$ in formula (\ref{zeta of s via zeta of 2}) in the case of smooth metrics.) 

Thus, formula (\ref{det formula}) and all calculations made after (\ref{det formula}) are justified.

\appendix

\section{Proof of Lemma \ref{magiclemma}}
For sufficiently large $\Re s$, the function $\widehat{\Psi}(s)$ is well defined and
\begin{equation}
\label{general regularization 1}
\widehat{\Psi}(s)=\pi^{-1}e^{-i\pi s}{\rm sin}(\pi s)J_\infty(s)+J_0(s),
\end{equation}
where
\begin{equation}
\label{J 0 j infty}
J_0(s):=\frac{1}{2\pi i}\int\limits_{|\mu|=\epsilon}\mu^{1-s}\partial_\mu^2 \Psi(\mu)d\mu, \qquad J_\infty(s):=\int\limits_{-\infty-i0}^{-\epsilon-i0}\mu^{1-s}\partial_\mu^2 \Psi(\mu) d\mu
\end{equation}
(here $0<\epsilon<\lambda_1$). Since the contour $|\mu|=\epsilon$ is compact, $J_0$ is holomorphic on $\mathbb{C}$ and $J_0(0)=0$. Integration by parts in (\ref{J 0 j infty}) yields
\begin{align}
\label{J 0 in}
-\partial_s J_0(s)&=\mu^{-s}\big[\mu\partial_\mu+s-1\big]\Psi(\mu)\Big|^{\mu=-\epsilon}+O(s)+\Psi(0),\\ \nonumber
J_\infty(s)&=\mu^{-s}[\mu\partial_\mu+s-1]\Psi(\mu)\Big|^{\mu=-\epsilon}+s(s-1)\int\limits_{-\infty-i0}^{-\epsilon-i0}\frac{\Psi(\mu)}{\mu^{s+1}}d\mu.
\end{align}
Now, the integration in (\ref{F asymp}) yields
\begin{align}
\label{meromorphic ext}
\begin{split}
\int\limits_{-\infty-i0}^{-\epsilon-i0}\frac{\Psi(\mu)}{\mu^{s+1}}d\mu-\int\limits_{-\infty-i0}^{-\epsilon-i0}\frac{\Phi(\mu)}{\mu^{s+1}}d\mu&=\\
=\sum_{k=1}^K (-\epsilon-i0)^{r_k-s}\Big[\frac{\Psi_k}{r_k-s}+&\frac{\epsilon[1+(s-r_k-1){\rm log}\epsilon]\tilde{\Psi}_k}{(s-r_k-1)^2}\Big],
\end{split}
\end{align}
where the second integral in the left-hand side is well-defined and holomorphic for $\Re s>\kappa$ due to the estimate $|\mu^k\partial^k_\mu\Phi(\mu)|=O(\mu^{\kappa})$. Then the first integral in (\ref{meromorphic ext}) can be extended meromorphically to the half-plane $\Re s>\kappa$. Since the right-hand side of (\ref{meromorphic ext}) is
$$-\frac{\tilde{\Psi}(\infty)}{s^2}-\frac{\Psi(\infty)+\pi i\tilde{\Psi}(\infty)}{s}+O(1)$$
near $s=0$, $J_\infty$ can be extended meromorphically to the neighborhood of $s=0$, and 
\begin{equation}
\label{J inf}
J_\infty(s)=\mu^{-s}[\mu\partial_\mu+s-1]\Psi(\mu)\Big|^{\mu=-\epsilon}+\Psi(\infty)+(s^{-1}-1+\pi i)\tilde{\Psi}(\infty)+O(s).
\end{equation}
The substitution of (\ref{J inf}) into (\ref{general regularization 1}) yields $\widehat{\Psi}(0)=\tilde{\Psi}(\infty)$. Similarly, the differentiation of (\ref{general regularization 1}) and taking into account formulas (\ref{J 0 in}), (\ref{J inf}) and the estimate $x^{-1}{\rm sin}(x)-1=O(x^2)$ leads to
\begin{align*}
[\partial_s\widehat{\Psi}](0)=[&J_\infty(s)-s^{-1}\tilde{\Psi}(\infty)]_{s=0}+\partial_s J_0(0)-\\
-&i\pi\tilde{\Psi}(\infty)=\Psi(\infty)-\tilde{\Psi}(\infty)-\Psi(0).
\end{align*}
Lemma \ref{magiclemma} is proved.
\qed

\section{Proof of Lemmas \ref{SPD lemma} and \ref{SPD lemma mult}}
\label{SPD theory}
\subsection*{Solutions and the DN maps for model cones.} Let $\mathbb{K}$ be the (infinite) cone of opening $\beta$ and let $\mathbb{K}(\epsilon)\subset\mathbb{K}$ is the $\epsilon$-neighborhood of its vertex. Let $\Delta^{\mathbb{K}}$ be the Laplacian on $\mathbb{K}$ and let $\Delta^{\mathbb{K}(\epsilon)}$ be the (Friedrichs) Dirichlet Laplacian on $\mathbb{K}(\epsilon)$. Then expression (\ref{eigenfunc expansion near vert}) is the exact (bounded) solution to the equation $(\Delta^{\mathbb{K}}-\lambda)u=0$ in $\mathbb{K}(\epsilon)$ with the Dirichlet data $u|_{\partial\mathbb{K}(\epsilon)}=f$. Indeed, each function $u(r,\varphi)=J_{2\pi|k|/\beta}(r\sqrt{\lambda})e^{2\pi k i\varphi/\beta}$ (where $(r,\varphi)$ are the polar coordinates on $\mathbb{K}$) obeys $(\Delta^{\mathbb{K}}-\lambda)u=0$ in $\mathbb{K}$ while formula (\ref{eigenfunc expansion near vert}) becomes the Fourier expansion of the boundary data $u|_{\partial\mathbb{K}(\epsilon)}=f$ if $r=\epsilon$. 

Note that $\lambda$ is the eigenvalue $\Delta^{\mathbb{K}(\epsilon)}$ if and only if one of the denominators $J_{2\pi|k|/\beta}(\epsilon\sqrt{\lambda})$ in (\ref{eigenfunc expansion near vert}) equals zero. Slightly changing $\epsilon$, one can assume that (fixed) $\lambda$ is not an eigenvalue of $\Delta^{\mathbb{K}(\epsilon)}$.

Recall that the Bessel functions admit the expansions
$$J_\nu(z)=z^\nu\tilde{J}_\nu(z)=z^\nu\sum_{j=0}^{\infty}\mathfrak{c}_k(\nu)z^{2j},$$
where the coefficients
$$\mathfrak{c}_j(\nu):=\frac{(-1)^j}{2^{\nu+2j}j!\Gamma(j+\nu+1)}$$
and their derivatives with respect to $\nu$ decay super-exponentially as $j,\nu\to+\infty$ due to the Stirling expansion 
$${\rm log}\Gamma(z)=(z-1/2){\rm log}z-z+1/2\pi+\dots, \qquad (\Re z>0).$$
Thus, the coefficients $c_{j}(\nu)$ $(j>0)$ in {\rm(\ref{eigenfunc expansion near vert})} and their derivatives with respect to $\nu$ decay super-exponentially as $j\to+\infty$ or $\nu\to+\infty$.

The Dirichlet-to-Neumann map $\Lambda=\Lambda(\lambda,\epsilon,\beta): \ u|_{\partial\mathbb{K}(\epsilon)}=(\partial_r u)|_{\partial\mathbb{K}(\epsilon)}$ associated with the equation problem $(\Delta^{\mathbb{K}}-\lambda)u=0$ admits the expression
\begin{equation}
\label{DN cone}
\Lambda f=\frac{\sqrt{\lambda}}{\beta\epsilon}\sum_{k\in\mathbb{Z}}\frac{J'_{2\pi|k|/\beta}(\epsilon\sqrt{\lambda})}{J_{2\pi|k|/\beta}(\epsilon\sqrt{\lambda})}e^{2\pi k i\varphi/\beta}(f,e^{2\pi k i\varphi/\beta})_{L_2(\partial \mathbb{K}(\epsilon))}
\end{equation}
obtained by the differentiation of (\ref{eigenfunc expansion near vert}) in $r$. Due to the aforementioned, straightforward but cumbersome calculations show that $\Lambda$ depends real-analytically on $\beta$, i.e., it admits the expansion in the (converging in $B(H^{1/2}(\partial\mathbb{K}(\epsilon);\mathbb{R});H^{-1/2}(\partial\mathbb{K}(\epsilon);\mathbb{R})$) Taylor series 
$$\Lambda(\lambda,\epsilon,\beta)=\Lambda(\lambda_0,\epsilon,\beta_0)+\sum_{l+s>0}\Lambda^{(l,s)}(\lambda_0,\epsilon,\beta_0)(\beta-\beta_0)^l (\lambda-\lambda_0)^s,$$ 
whose coefficients $\Lambda^{(k,s)}$ are order one pseudo-differential operators. 

\subsection*{Parametrix for the resolvent kernel.} Suppose that $\lambda(0)$ is not an eigenvalue of $\Delta_t$ and $y\in X$ is not a vertex. Then there are holomorphic coordinates $z=z_t$ near $y$ in which the metric $m$ is of the form $m=|dy|^2$. Introduce the singular part 
$$R^{sing}_{t,\lambda}(x,y):=\frac{1}{2\pi}\chi(d)K_0(d\sqrt{-\lambda}) \qquad (d=d_t(x,y))$$  
of the resolvent kernel $R_\lambda(x,y)=R_{t,\lambda}(x,y)$ of $\Delta=\Delta_t$. Here we assumed that the cut for the logarithm ${\rm log}z$ avoids the small neighborhood containing $z=-\lambda$. In addition, $\chi$ is a smooth cut-off function equal to one near the origin and its support is small.
Put 
$$p=p_{t,\lambda,y}(x):=-[\Delta_{t,x},\chi(d)]\frac{1}{2\pi}K_0(d\sqrt{-\lambda}).$$ 
Then the resolvent kernel can be represented as $R_{t,\lambda}=R^{sing}_{t,\lambda}+u_{t,\lambda,y}$, where the remainder $u=u_{t,\lambda,y}$ obeys the equation $(\Delta-\lambda)u=p$.

\subsection*{Reduction to a boundary value problem in a regular fixed domain.} Introduce the notation $u_j:=u|_{\mathbb{K}_j(\epsilon)}$, ($j=1,\dots,M$) and $u_0:=u|_{X(\epsilon)}$, where $X(\epsilon)$ the complement of all $\mathbb{K}_j(\epsilon)$ in $X$.  As mentioned above, one can chose (small) $\epsilon$ in such a way that $\lambda$ is separated from the spectra of all Dirichlet Laplacians in $K_j(\epsilon)$, $j=1,\dots,M$ while ${\rm supp}p$ belongs to $X(\epsilon)$. Then formula (\ref{eigenfunc expansion near vert}) with $u=u_j$ describes all the restrictions $u_j$ ($j>0$) while $u_0$ is the solution to the problem
\begin{equation}
\label{reduced problem}
(\Delta-\lambda)u_0=p \text{ in } X(\epsilon), \quad (\Lambda_j+\partial_\nu)u_0=0 \text{ on } \partial\mathbb{K}_j(\epsilon) \quad (j=1,\dots,M).
\end{equation}
Here $\nu$ is the exterior unit normal vector on $\partial X(\epsilon)$ while $\Lambda_j$ is the Dirichlet-to-Neumann map for the cone $\mathbb{K}(\epsilon)=\mathbb{K}_j(\epsilon)$ given by (\ref{DN cone}) with $(r,\varphi)=(r_j,\varphi_j)$, $\beta=\beta_j$. Conversely, if $u_0$ is a solution to (\ref{reduced problem}), then it admits a continuation to the solution $u$ to $(\Delta-\lambda)u=p$ on $X$ which is given by expression (\ref{eigenfunc expansion near vert}) in each $K_j(\epsilon)$, where $(r,\varphi)=(r_j,\varphi_j)$, $\beta=\beta_j$ and $f_j(\varphi_j):=u_0|_{\partial\mathbb{K}_j(\epsilon)}$.

Now, we introduce the small parameter $t$ and allow the coefficients $z_j,\beta_j$ in metric (\ref{polyhedral metric}) as well as the parameters $y,\lambda$ to depend real-analytically on $t$. We assume that $\lambda(0)$ is not an eigenvalue of $\Delta_0$ (the $\lambda(t)$ is not an eigenvalue of $\Delta_t$ for small $t$). Although the domain $X(\epsilon)=X(\epsilon,t)$ in (\ref{reduced problem}) depend on $t$, one can find a real-analytic family of smooth diffeomorphisms
$$\varsigma=\varsigma_t: \ X(\epsilon,t)\mapsto X(\epsilon,0)$$ 
to transfer problem (\ref{reduced problem}) to a fixed domain $X(\epsilon,0)$. 

(An example of such family can be constructed as follows. Let $\chi_j$ be a smooth non-negative cut-off function on $X$ equal to one in $\mathbb{K}_j(2\epsilon)$ and supported in $\mathbb{K}_j(3\epsilon)$. We define $\varsigma_t$ in such a way that $\varsigma_t$ is the identity on $X(3\epsilon,0)$ and 
$$\varsigma_t(z):=\chi_j(z)\cdot\mathscr{Z}^{(j)}_0\circ\mathfrak{T}_{\beta_j(0),\beta_j(t)}\circ{\mathscr{Z}^{(j)}_t}^{-1}+(1-\chi_j(z))\cdot z$$
for $z\in\mathbb{K}_j(3\epsilon)$; here $\mathscr{Z}^{(j)}_t$ is defined by (\ref{loc coord map}) and $\mathfrak{T}_{\beta,\beta'}$ is defined after (\ref{loc coord map}). Then $\varsigma_t$ is a diffeomorphism for small $t$ and it is real-analytic in $t$.)

\subsection*{Perturbation series.} In the subsequent, we keep the same notation for the Laplacian and the DN-maps transferred (by means of $\varsigma_t$) to $X(\epsilon,0)$, i.e., we consider problem (\ref{reduced problem}) in the fixed domain while the (induced) metric $g(t)$ on $X(\epsilon)=X(\epsilon,0)$, the normal vector $\nu$ and the pseudo-differential operators $\Lambda_j(t)$ in (\ref{reduced problem}), and the function $p$ depend real-analytically on $t$,
$$\Delta_t+\lambda_t=\sum_{k=0}^\infty A_k t^k, \quad \Lambda_j(t)+\partial_\nu(t)=\sum_{k=0}^\infty L_k t^k \text{ on each } \partial\mathbb{K}_j(\epsilon), \quad p=\sum_{k=0}^\infty p_k t^k.$$
We seek a formal solution to (\ref{reduced problem}) of the form $u_0=\sum_{k=0}^\infty v^{(k)}t^k$. Substituting the above expansions into (\ref{reduced problem}), one obtains the sequence of problems
\begin{equation}
\label{reduced problem step}
A_0 v_l=p_l-\sum_{k=1}^l A^k v_{l-k} \text{ in } X(\epsilon), \qquad L_0 v_l=-\sum_{k=1}^l L^k v_{l-k} \text{ on } \partial X(\epsilon).
\end{equation}
For $f\in C^{\infty}(\partial X(\epsilon);\mathbb{R})$, introduce the function 
$$Y_t[f](x):=f(s)\varrho\chi(\varrho),$$ 
where $(s=s_t,\varrho=\varrho_t)$ are semi-geodesic coordinates near $\partial X(\epsilon)$, i.e. $\varrho_t(x)$ is the distance from $x\in X(\epsilon)$ to $\partial X(\epsilon)$ while $s_t(x)$ is the closest to $x$ point of $\partial X(\epsilon)$. By replacing $v_l$ with $v_l-Y_0[\sum_{k=1}^l L^k v_{l-k}]$, one can reduce (\ref{reduced problem step}) to the problem of the form $A_0 y=\tilde{p}$ in $X(\epsilon)$, $L_0 y=0$ on $\partial X(\epsilon)$, which is solvable due to our assumption that $\lambda(0)$ does not belong to the spectrum of $\Delta_0$. Thus, each problem (\ref{reduced problem step}) is uniquely solvable.

\subsection*{Diefferentiability of $\mathcal{F}_t(\cdot,\cdot|\mathcal{E})$.} Now, put $$U_M:=\sum_{l=0}^M v_l-Y_t\Big[(\Lambda_j(t)+\partial_\nu(t))\Big(\sum_{l=0}^M v_l\Big)\Big],$$ 
then $(\Delta_t+\lambda_t)(U_M-u_0)$ and all its derivatives in $x$ decay as $O(t^{M+1})$ uniformly in $X(\epsilon)$ as $t\to 0$ while $(\Lambda_j(t)+\partial_\nu(t))(U_M-u_0)=0$ on $X(\epsilon)$. In particular, $U_M-u_0$ admits the extension (by formulas (\ref{eigenfunc expansion near vert})) on $X$ obeying $(\Delta_t-\lambda_t)(U_M-u_0)=0$ in each $\mathbb{K}_j(\epsilon)$. For extended $U_M-u_0$ one has $\|(\Delta_t-\lambda_t)(U_M-u_0)\|_{L_2(X;m_t)}=O(t^{M+1})$. Since $\lambda_0$ is not an eigenvalue of $\Delta_0$, the distance between $\lambda_t$ and the spectrum of $\Delta_t$ is positive for all small $t$ and the last estimate implies $\|U_M-u_0\|_{L_2(X;m_t)}=O(t^{M+1})$. The above inequalities and local estimates of solutions to elliptic equations (provided by the increasing smoothness theorems) imply that $U_M-u_0$ all its derivatives in $x$ decay as $O(t^{M+1})$ uniformly in $X(2\epsilon)$ as $t\to 0$. Hence $u_0$ is differentiable in $t,x$ on $X(\epsilon)$ and, since $\epsilon>0$ is arbitrary, the resolvent kernel $R^{(t)}_\lambda(x,y)$ is differentiable in $(x,y,\lambda,t)$ outside the diagonal $(x,x,\lambda,t)$, the vertices $(z_k(t),z_j(t),\lambda,t)$ and the poles $(x,y,\lambda_j(t),t)$. Repeating the same perturbation method for the terms $v_1=\partial_t u_0$ e.t.c., one proves the infinite differentiability of $u_{t,\lambda,y}=R_{t,\lambda}-R^{sing}_{t,\lambda}$ in the parameters.

Suppose that the domain $U\subset\mathbb{C}$ contains eigenvalues $\lambda_j(0),\dots,\lambda_{j+m-1}(0)$ while its boundary $\partial U$ does no intersect with the spectrum of $\Delta_0$; then $U$ contain $\lambda_j(t),\dots,\lambda_{j+m}(t)$ for sufficiently small $t$. Let the function $\mathcal{E}$ be holomorphic in the neighborhood of $\overline{U}$. Due to the residue theorem, function (\ref{F residue func}) obeys
\begin{align*}
\mathcal{F}_t(x,y|\mathcal{E})&=\sum_{j=0}^{m-1}\mathcal{E}(\lambda_{k+j}(t))u_{k+j}(x)u_{k+j}(y)=\\
=&\int_{\partial U}\Big[\mathcal{E}(\lambda)R_{t,\lambda}(x,y)\Big]d\lambda=\int_{\partial U}\Big[\mathcal{E}(\lambda)u_{t,\lambda,y}(x)\Big]d\lambda.
\end{align*}
(here we used the fact that $K_0(d\sqrt{-\lambda})$ is holomorphic in the neighborhood of the ray $[\epsilon,+\infty)$ for any $\epsilon>0$). In particular, $\mathcal{F}_t(x,y|\mathcal{E})$ is smooth in $(x,y,t)$ outside vertices $(z_k(t),z_j(t),t)$. Thus, we have proved statement {\it (1)} of Lemma \ref{SPD lemma mult}. 

Suppose, for a while, that $U$ contains only one simple eigenvalue $\lambda_j$. Then 
$$\lambda_j=\frac{\mathcal{F}_t(y_0,y_0|\lambda)}{\mathcal{F}_t(y_0,y_0|1)}, \qquad u_j(x)=\frac{\mathcal{F}_t(x,y_0|1)}{\sqrt{\mathcal{F}_t(y_0,y_0|1)}}$$
are smooth in $t$ provided that $\mathcal{F}_0(y_0,y_0|1)>0$ and the eigenfunction $u_j$ is chosen in such a way that $u_j(y_0)=\sqrt{\mathcal{F}_t(y_0,y_0|1)}>0$. So, we have proved statement {\it (1)} of Lemma \ref{SPD lemma}. 

Due to {\it (1)}, the function $f(\varphi)=f_t(\varphi)=u\circ\mathscr{Z}^{(j)}_t(\epsilon,\varphi)$ is smooth in $t$. Since the coefficients $c_{j}(\nu)$ $(j>0)$ in {\rm(\ref{eigenfunc expansion near vert})} and their derivatives with respect to $\nu$ decay super-exponentially as $j\to+\infty$ or $\nu\to+\infty$, series {\rm(\ref{eigenfunc expansion near vert})} admit term-wise differentiation in $t,r,\varphi$ whence
$$\dot{\bf u}_j=O(r^{\frac{2\pi}{\beta}}), \qquad \frac{\partial\dot{\bf u}_j}{\partial r}=\Big(\sum_{\pm}c_\pm e^{\pm 2\pi i\varphi/\beta}\Big)r_j^{\frac{2\pi}{\beta}-1}+O(r^{\frac{2\pi}{\beta}-1+\delta}) \qquad (t=z_j)$$
(where $\delta>0$ and $c_\pm\in\mathbb{C}$) and
$$\dot{\bf u}_j=O(r^{\frac{2\pi}{\beta}}{\rm log}r), \quad \frac{\partial\dot{\bf u}_j}{\partial r}=O(r^{\frac{2\pi}{\beta}-1}{\rm log}r) \qquad (t=\beta_j).$$
Estimates 
$$\frac{\partial\xi_j}{\partial z_j}\circ\mathscr{Z}^{(j)}_t=O(r_j^{b_j/(b_j+1)}), \qquad \frac{\partial\xi_j}{\partial \beta_j}\circ\mathscr{Z}^{(j)}_t=O({\rm log}r_j)$$
follow from (\ref{conical coordinates}). Substituting the above formulas into the chain rule
$$\dot{u}\circ\mathscr{Z}^{(j)}_t={\bf u}_j-\frac{\partial{\bf u}}{\partial\xi_j}\,\cdot\dot{\xi_j}\circ\mathscr{Z}^{(j)}_t-\frac{\partial{\bf u}}{\partial\overline{\xi_j}}\,\cdot\overline{\dot{\xi_j}\circ\mathscr{Z}^{(j)}_t},$$
one arrives at (\ref{dot u near vertex est}). Thus, we have proved statement {\it (2)} of Lemma \ref{SPD lemma}.

Now, consider the case $\lambda_j(t)=\dots=\lambda_{j+m-1}(t)$ for all $t\in(-t_0,t_0)$; then $\mathcal{F}_t(\cdot,\cdot|\mathcal{E})=\mathcal{E}(\lambda_j)\mathcal{F}_t(\cdot,\cdot|1)$, the equation $(\Delta_t-\lambda_j(t))\mathcal{F}_t(\cdot,\cdot|1)=0$ holds outside vertices, and statement {\it (2)} of Lemma \ref{SPD lemma} is valid for $u=\mathcal{F}_t(\cdot,\cdot|1)$. Thus, $\lambda_j$ is differentiable in $t$ and the formula 
\begin{equation}
\label{gen E formula var eig}
\partial_t\Big(\sum_{k=0}^{m-1}\mathcal{E}(\lambda_{j+k})\Big)=\mathscr{I}\,{\rm p.v.}\int\limits_X\dot{\phi}_t(x)\mathcal{F}_t(x,x|\lambda\mathcal{E}'(\lambda))dS_t
\end{equation}
is obtained by repeating the reasoning leading to (\ref{variation of eigenvalues conic}) in the multiplicity one case. Thus, formula (\ref{gen E formula var eig}) is valid for all $t$ except the discrete set of parameters for which the multiplicities of $\lambda_j(t)$ change. 

It remains to prove that the left-hand side of (\ref{gen E formula var eig}) is smooth in $t$ (including the above exceptional values). Introduce the basis $v_j,\dots,v_{j+m-1}$ of solutions to $(\Delta_0-\lambda_{j+k}(0))v_{j+k}=0$ orthonormal in $L_2(X(\epsilon);m_0)$ (such a basis can be constructed by ortonormalization of $u_j,\dots,u_{j+m-1}$ for sufficiently small $\epsilon$). Denote $M_{kl}(t)=(u_{j+k}(\cdot,t),v_{j+l})_{L_{2}(X(\epsilon);m_0)}$ and introduce the matrix $D(t)={\rm diag}(\lambda_{j}(t),\dots,\lambda_{j+m-1}(t))$. Let $M=UA$ be a polar decomposition of $M$, where $A$ is positive hermitian, $A=\sqrt{M^*M}$, and $U$ is unitary. Note that $M(0)$ is close to the identity matrix for small $\epsilon$; hence, one can assume that $M(0)$ is invertible. We have
\begin{align}
\label{polar decomp different}
\begin{split}
\int\limits_{X(\epsilon)}\int\limits_{X(\epsilon)}\mathcal{F}_t(x,y|\mathcal{E})v_{j+k}(x)v_{j+l}(y)dS(x)dS(y)=\\=(M^*\mathcal{E}(D)M)_{k,l}=(AU^*\mathcal{E}(D)UA)_{k,l},
\end{split}
\end{align}
where the left-hand side is smooth in $t$. Put $\mathcal{E}=1$, then (\ref{polar decomp different}) implies that the matrix $A^2=M^*M$ is smooth in $t$. Therefore, $A^2$ is invertible and $A^{-2}$, $A$, $A^{-1}$ are smooth for $t$ close to zero. Now, from (\ref{polar decomp different}) it follows that
\begin{align}
\label{polar decomp different 1}
\begin{split}
A^{-1}_{pk}(t)\int\limits_{X(\epsilon)}\int\limits_{X(\epsilon)}\mathcal{F}_t(x,y|\mathcal{E})v_{j+k}(x)v_{j+l}(y)dS(x)dS(y)A^{-1}_{lp}(t)=\\={\rm Tr}(U^*\mathcal{E}(D)U)={\rm Tr}\mathcal{E}(D(t))=\sum_{k=0}^{m-1}\mathcal{E}(\lambda_{j+k}(t)).
\end{split}
\end{align}
is smooth in $t$. In conclusion, it worth noting that the operator $\mathscr{J}$ as well as principal values of the integrals do not appear in the above reasoning for the smooth metrics case considered in Section \ref{sec smooth Polyakov} since $\dot{\phi}(x)$ is smooth in $(x,t)$ in this case. \qed

\end{document}